
\documentclass[11pt,letterpaper,reqno]{amsart}


\usepackage{amsmath}
\usepackage{amsfonts}
\usepackage{amssymb}
\usepackage{amsthm}
\usepackage{amscd}
\usepackage[hidelinks]{hyperref}
\usepackage{color,soul}
\usepackage{centernot}
\usepackage{enumitem}
\usepackage{todonotes}

\usepackage{bbm}
\usepackage{latexsym}
\usepackage{mathrsfs}
\usepackage{psfrag}
\usepackage[dvips]{epsfig}
\usepackage{epsfig}
\usepackage[all]{xy}         








\theoremstyle{plain}                              

\newtheorem{theorem}{Theorem}
\newtheorem{cor}[theorem]{Corollary}
\newtheorem{proposition}[theorem]{Proposition}
\newtheorem{lemma}[theorem]{Lemma}

\theoremstyle{definition}                         
\newtheorem{definition}{Definition}

\newtheorem{remark}{Remark}

\theoremstyle{remark}                             

\numberwithin{equation}{section}




\newcommand{\R}{\mathbb{R}}                     
\newcommand{\C}{\mathbb{C}}                     
\newcommand{\N}{\mathbb{N}}                     
\newcommand{\prob}{\mathbb{P}}                    
 
\newcommand{\indicator}{\mathbbm{1}} 
\newcommand{\dist}{\mathop{\mathrm{dist}}} 
\newcommand{\Ca}{\mathop{\mathrm{Cap}}} 
\newcommand{\cl}{\mathop{\mathrm{Cl}}}


\newcommand{\eps}{\varepsilon}
\newcommand{\Proba}{\mathcal{P}}

\DeclareMathOperator{\E}{\mathbb{E}}


         
\newcommand{\emr}[1]{\textcolor{black}{#1}}


\usepackage{mathtools}

\providecommand{\norm}[1]{\left\lVert#1\right\rVert}       


\DeclareMathOperator{\supp}{supp}



\begin{document}

\title{Logarithmic capacity of random $G_\delta$ sets}


\author[F. Quintino]{Fernando Quintino}
\thanks{The author was supported in part by NSF grants DMS-1700143 (PI - M. Foreman) and DMS-1855541 (PI - A. Gorodetski).}

\address{Department of Mathematics, University of California, Irvine}
\email{fquintin@uci.edu}

\subjclass[2010]{Primary: 31A15, 31C15.
  Secondary: 28A12.}
\date{\today}
\dedicatory{}
\keywords{Logarithmic capacity, phase transition, parametric Furstenberg theorem.}
\begin{abstract}
We study the logarithmic capacity of $G_\delta$ subsets of the interval $[0,1].$ Let $S$ be of the form 
\begin{align*}
S=\bigcap_m \bigcup_{k\ge m} I_k,
\end{align*}
where each $I_k$ is an interval in $[0,1]$ with length $l_k$ that decrease to $0$. We provide sufficient conditions for $S$ to have full capacity, i.e. $\Ca(S)=\Ca([0,1])$. We consider the case when the intervals decay exponentially and are placed in $[0,1]$ randomly with respect to some given distribution. The random $G_\delta$ sets generated by such distribution satisfy our sufficient conditions almost surely and hence, have full capacity almost surely. This study is motivated by the $G_\delta$ set of exceptional energies in the parametric version of the Furstenberg theorem on random matrix products. We also study the family of $G_\delta$ sets $\{S(\alpha)\}_{\alpha>0}$ that are generated by setting the decreasing speed of the intervals to $l_k=e^{-k^\alpha}.$ We observe a sharp transition from full capacity to zero capacity by varying $\alpha>0$. 

\end{abstract}

\maketitle


\section{Introduction}
\subsection{The setting}Finite signed Borel measures on $\C$ form a vector space over $\R$. Given two finite singed measures Borel measures $\mu$ and $\nu$ on $\C$ we define their \textit{interaction} by
\begin{align}\label{d.interaction}
I(\nu,\mu):=\iint (-\log|z-w|)\, d\nu(z)\,d\mu(w). 
\end{align}
The interaction is a bilinear form on the vector space of finite singed Borel measures on $\C$ with the following properties: 
\begin{enumerate}
\item $I(\nu,\mu)=I(\mu,\nu),$
\item $I(\nu,\mu)>0$, if $\nu$ and $\mu$ are probability measures and the union of their support has diameter of at most 1,
\item $I(\nu,\mu+\mu')=I(\nu,\mu)+I(\nu,\mu')$ and $I(\nu, c\mu)=c I(\nu,\mu)$.
\end{enumerate}
This bilinear form is a generalization of the \textit{energy} of measure $\mu$, which is defined as $I(\mu):=I(\mu,\mu).$ We can think of the energy of a measure as \textit{self-interacting}. In physics $\mu$ is considered as a charge distribution on $\C$ and $I(\mu)$ as the total energy of $\mu$ on $\C$ (see \cite[pg. 56]{Ra}).

The \textit{logarithmic capacity} of a subset $E\subset \C$ is then defined by minimizing the energy:
\begin{align*}
\Ca(E) = \exp(-\inf\{ I(\mu)\}),
\end{align*}
where the infimum is taken over the set of Borel probability measures whose support is a compact subset of~$E$ (we interpret $e^{-\infty}$ as $0$). 
Capacity gauges how far away a set is from being \textit{polar}. A set $E$ is said to be \textit{polar} if $I(\mu)=\infty$ for every non-trivial measure $\mu$ with compact support in $E$.

Most of the literature on capacity has been devoted to the study of compact sets in $\C$, but non-compact sets has also been studied (see \cite{Ra}, \cite[Appendix A]{Si}). Moreover, $G_\delta$ sets have also been of interest (see \cite{De}, \cite{KQ}, \cite{Ra}). \emr{In \cite[Section 9]{Da}, the authors discuss the applications of potential theory to spectral theory.}

Our focus will be on the capacity of $G_\delta$'s of the form:
\begin{align}\label{d.S}
S=\bigcap_m \bigcup_{k\ge m} I_k,
\end{align}
where each $I_k$ is an open interval of length $l_k$ with center at $c_k\in (0,1)$. The sequence $\{l_k\}$ is taken to approach 0 as $k\rightarrow \infty$. It is immediate that $S$ is a $G_\delta$ subset of $[0,1]$. Under certain assumptions, we will show that the set $S$ has {full capacity} on the unit interval:
\begin{definition}
Let $J\subset\C$. A set $E\subset J$ is said to have \textit{full~capacity} on $J$ if
$$\Ca(E)=\Ca(J).$$
\end{definition}
The capacity of an interval $J$ is $\Ca(J)=\frac{|J|}{4}$ (see, e.g.~\cite[pg.~135]{Ra},~\cite[Example~A.17]{Si}).

Our results and methods can be extended to higher dimensions, but we do not elaborate on that here. In this paper, we will be focused on $G_\delta$ subsets of an interval of the real line. We are mostly interested in one-dimension because our motivation came from the one-dimensional random $G_\delta$ sets of exceptional energies in the parametric version of the Furstenberg theorem (see Section \ref{s.motivation}).

\subsection{Main results}\label{s.main.results}
Our first main result is devoted to the random setting, being a ``toy model" for the exceptional energies in the parametric version of the Furstenberg theorem (see Section \ref{s.motivation}). Namely, the set of exceptional parameters in \cite{GK} is generated by exponentially small intervals, that are asymptotically distributed with respect to some (dynamically defined) measure. However, their positions are random.

A \textit{random} $G_\delta$ set is obtained by viewing the centers $\{c_k\}$ as random variables. As a toy model, it is reasonable to consider first the set generated by random intervals, that are placed independently (with the same ``reasonable" distribution of their centers) - instead of some complicated definition coming from the random dynamical systems.

\begin{theorem}\label{t.random.full}
Let $\{c_k\}$ be i.i.d. with an absolutely continuous distribution on any interval $J$ with almost everywhere positive and uniformly bounded density function. Take $l_k=e^{-\lambda k}$ for some fixed $\lambda>0$ and let $S$ be the corresponding $G_\delta$-set \eqref{d.S}. Then, almost surely $S$ has full capacity on the unit interval:
\begin{align*}
 \Ca(S)=\Ca([0,1])>0.
\end{align*}

\end{theorem}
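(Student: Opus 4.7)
The upper bound $\Ca(S)\le\Ca([0,1])=1/4$ is immediate from monotonicity of capacity (since $S\subset[0,1]$), so the work is in the matching lower bound. The plan is to construct, almost surely, a compact set $K\subset S$ together with a probability measure $\nu$ on $K$ whose logarithmic energy $I(\nu)$ is arbitrarily close to $\log 4$; this forces $\Ca(S)\ge 1/4$.

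\emph{Step 1: Cantor-like nested family.} Fix a rapidly growing sequence of positive integers $M_0<M_1<M_2<\cdots$. I would inductively build a family $\mathcal{F}_j$ of pairwise disjoint closed intervals, each contained in some $I_k$ with $k\in[M_j,M_{j+1})$, and with every $I\in\mathcal{F}_{j+1}$ contained in some element of $\mathcal{F}_j$. Setting $K=\bigcap_j\bigcup_{I\in\mathcal{F}_j}I$, every $x\in K$ lies in some $I_k$ with $k\ge M_j$ for each $j$, hence in infinitely many $I_k$; thus $K\subset S$ up to a countable set of boundary points that can be removed without affecting capacity.

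\emph{Step 2: Probabilistic feasibility.} The construction needs, for each parent interval $I'\in\mathcal{F}_j$ of length $l'$, a rich supply in $\{I_k:k\in[M_{j+1},M_{j+2}),\ I_k\subset I'\}$ from which to choose children. Since the centers are i.i.d.\ with density bounded uniformly above and positive almost everywhere, the expected number of such $k$ is of order $l'\cdot(M_{j+2}-M_{j+1})$, and a Hoeffding/Bernstein bound gives exponential concentration. By making the gaps $M_{j+2}-M_{j+1}$ dominate $e^{\lambda M_{j+1}}$, a Borel--Cantelli argument ensures that almost surely the required richness holds at every parent and every stage, and that the centers of the selected children are roughly equidistributed according to $\rho$ inside their parent.

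\emph{Step 3: Energy estimate.} Define $\nu$ on $K$ by iteratively distributing mass within each parent among its children so as to approximate the equilibrium measure $\mu^*_{I'}$ of the parent $I'$. I would split
\[
I(\nu)=\iint -\log|x-y|\,d\nu(x)\,d\nu(y)
\]
according to the first stage at which $x$ and $y$ lie in different elements of $\mathcal{F}_j$. Pairs separating at stage $j$ contribute close to the corresponding pair-integral under $\mu^*_{I'}$ for the common parent $I'$, while pairs remaining together at stage $j$ sit at distance $\le e^{-\lambda M_j}$, so their self-interaction contributes a $j$-vanishing error. Summing telescopes toward the equilibrium energy $\log 4$ of $[0,1]$.

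\emph{Main obstacle.} The delicate point is not the existence of the construction but matching $I(\nu)$ to $\log 4$ rather than merely showing $I(\nu)<\infty$, which would only give positive capacity. This forces a quantitative version of Step 2: the selected children must approximate $\mu^*_{I'}$ strongly enough that the cross-scale errors sum to $o(1)$ as $j\to\infty$. The i.i.d.\ uniformly-bounded-density hypothesis is precisely what delivers the quantitative equidistribution needed at every scale, through concentration inequalities combined with a Borel--Cantelli union over the doubly indexed collection of parents and stages.
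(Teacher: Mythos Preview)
Your construction is a genuine alternative to the paper's route. The paper does \emph{not} build a Cantor-type nested set. Instead it proves a deterministic criterion (assumptions A.1--A.3 on the centers) that guarantees the set $S$ is \emph{re-distributable}: given any probability measure with piecewise continuous density on $[0,1]$, one can push it onto a finite union of the $I_k$'s with arbitrarily small increase in energy. This is then iterated (Proposition~\ref{t.fullcapacity}) starting from a continuous approximation to the arcsine density, producing a weak-$*$ limit supported in $S$ with energy close to $\log 4$. The probabilistic input is confined to a separate verification (Theorem~\ref{t.random.assumptions}) that i.i.d.\ centers with bounded positive density satisfy A.1--A.3 almost surely. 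So the paper's re-distributions are ``flat'' (each $V_n$ covers essentially all of $[0,1]$), whereas yours are hierarchically nested inside parents.

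The substantive gap in your sketch is Step~3, and it is exactly the point the paper has to work hardest on. Your claim that ``pairs remaining together at stage $j$ \dots\ contribute a $j$-vanishing error'' is not justified and is not automatic. In the paper's language this is the \emph{self-interaction} term, and Lemma~\ref{l.self.constant.one} together with Remark~\ref{r.single.level.not.enough} show that for a single re-distribution over indices $k\in\{n,\dots,p(n)-1\}$ the self-interaction contribution is
\[
\frac{1}{(\#\mathcal{A}_n)^2}\sum_k \lambda k \;=\; \lambda\,\frac{p(n)+n-1}{2(p(n)-n)} \;\ge\; \frac{\lambda}{2},
\]
bounded below \emph{regardless of how many intervals are used}. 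The paper therefore cannot get the energy increment below $\varepsilon$ with a single layer; it introduces a \emph{multi-level average} $\mu^m=\frac{1}{q(m)}\sum_{s<q(m)}\hat\mu_{2^s m}$ with $q(m)\to\infty$ to kill this term. Your nested scheme redistributes mass $\nu(I')$ within each parent $I'$, so the analogous self-interaction cost is roughly $\sum_{I'\in\mathcal F_j}\nu(I')^2\cdot \lambda \bar k/N_{I'}$, with $\bar k\sim M_{j+1}$ and $N_{I'}\sim (M_{j+2}-M_{j+1})e^{-\lambda M_j}$; making this summable in $j$ is a genuine balancing problem between the growth of $M_j$ and the decay of $\sum_{I'}\nu(I')^2$, and you have not carried it out. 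A second issue is that you propose to approximate the \emph{equilibrium} measure $\mu^*_{I'}$ of each parent, whose density blows up at the endpoints; the paper deliberately works with continuous densities throughout (approximating the arcsine density only once, at the start) precisely to avoid this, and then removes the underlying density $\phi$ at the end via Proposition~\ref{p.absolutely.continuous}. Without an analogue of these two devices your telescoping claim toward $\log 4$ remains a heuristic rather than a proof.
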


\begin{remark}
Full capacity is a property that is inherited when restricted to subintervals (see {\cite[Proposition 1.6]{KQ}}): If $E$ is a subset of interval $J$ such that $\Ca(E)=\Ca(J)$, then given any subinterval $J'\subset J$, one has $\Ca(E\cap J')=\Ca(J')$.
\end{remark}
\begin{remark}\label{r.interval.WLOG}
In Theorem \ref{t.random.full} (and Theorem \ref{t.e^-k.fullcap} below), the interval $[0,1]$ may be replaced with any bounded interval $J$ due to the fact that $\Ca( \beta\cdot J)= \beta\cdot\Ca([0,1])$ for some $\beta>0$. Without loss of generality, we will only be working on the interval $[0,1]$. So, we will take $dx$ to be the restriction to the unit interval: $\left. dx\right|_{[0,1]}$.
\end{remark}

Now, take the centers $\{c_k\}$ from Theorem \ref{t.random.full} and let us vary the lengths of the intervals as a function of the parameter $\alpha\in(0,1]$: $l_k=e^{-\lambda k^\alpha}$. It turns out that at $\alpha=1$ the capacity undergoes a (sharp) phase transition:

\begin{theorem}[Random phase transition]\label{t.random.phase.transition}
Let $\{c_k\}$ be i.i.d. with an absolutely continuous distribution with density function that is bounded and positive almost everywhere with respect to the Lebesgue measure. Let $S$ be generated by $l_k:= e^{-\lambda k^\alpha}$ for $\lambda>0$ and $\alpha>0$. Then
\begin{enumerate}
\item $\Ca(S)=\Ca([0,1])>0$ for $0<\alpha\leq 1$ 
almost surely,
\item $\Ca(S)=0$ for $1<\alpha$.
\end{enumerate}
\end{theorem}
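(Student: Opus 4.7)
The plan is to reduce both parts of the theorem to monotonicity arguments combined with either Theorem \ref{t.random.full} (for full capacity) or a standard Cauchy--Schwarz energy estimate (for zero capacity).

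For part (1), the case $\alpha = 1$ is precisely Theorem \ref{t.random.full}. For $0 < \alpha < 1$, observe that $k^\alpha \le k$ for every $k \ge 1$, so the interval of length $e^{-\lambda k^\alpha}$ centered at $c_k$ contains the interval of length $e^{-\lambda k}$ centered at the same $c_k$. Writing $S(\alpha)$ for the $G_\delta$-set with exponent $\alpha$ for a fixed realization of the centers, these inclusions at the level of each $I_k$ propagate through $\bigcap_m \bigcup_{k \ge m}$ to yield $S(1) \subset S(\alpha)$. Since $S(1)$ has full capacity on $[0,1]$ almost surely by Theorem \ref{t.random.full} and capacity is monotone under inclusion, $\Ca(S(\alpha)) \ge \Ca(S(1)) = \Ca([0,1])$, while the reverse inequality is trivial from $S(\alpha) \subset [0,1]$.

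For part (2), we use the cover $S \subset U_m := \bigcup_{k \ge m} I_k$, valid for every $m$, and show $\Ca(U_m) \to 0$. Given any probability measure $\mu$ with compact support in $U_m$, partition the intervals into disjoint Borel pieces $J_k := I_k \setminus \bigcup_{j < k} I_j$ and write $\mu = \sum_{k \ge m} \mu_k$ with $\mu_k := \mu|_{J_k}$ and masses $p_k = \mu_k(J_k)$ summing to $1$. Since $U_m \subset [0,1]$ has diameter at most $1$, the kernel $-\log|z-w|$ is nonnegative on $U_m \times U_m$, so all cross-energies $I(\mu_j, \mu_k)$ with $j \ne k$ are nonnegative. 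The self-energies satisfy $I(\mu_k, \mu_k) \ge p_k^2 \log(4/l_k)$ via $\Ca(I_k) = l_k/4$, giving
\begin{align*}
I(\mu) \ge \sum_{k \ge m} p_k^2 \log(4/l_k),
\end{align*}
and minimizing over $p_k \ge 0$ with $\sum p_k = 1$ (Cauchy--Schwarz) yields
\begin{align*}
I(\mu) \ge \left(\sum_{k \ge m} \frac{1}{\log(4/l_k)}\right)^{-1}.
\end{align*}
For $l_k = e^{-\lambda k^\alpha}$ with $\alpha > 1$, this sum is bounded by $\sum_{k \ge m} 1/(\lambda k^\alpha) \to 0$ as $m \to \infty$, so $\inf I(\mu) \to \infty$ and hence $\Ca(U_m) \to 0$, forcing $\Ca(S) = 0$.

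Neither part is technically deep: part (1) is immediate from the previous theorem after observing the monotonicity in $\alpha$ for fixed centers, and part (2) is a standard capacity-of-a-union argument. The only subtlety worth flagging is ensuring the cross-terms in $I(\mu)$ have the favorable sign, which is why it is convenient to work inside a set of diameter at most $1$ (cf.\ Remark \ref{r.interval.WLOG}); this is the step I would be most careful to phrase precisely, though it is not an obstacle.
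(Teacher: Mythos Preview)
Your proof is correct and follows essentially the same route as the paper: for part~(1) the paper likewise invokes Theorem~\ref{t.random.full} at $\alpha=1$ and then uses monotonicity of capacity together with the inclusion $S(1)\subset S(\alpha)$; for part~(2) the paper cites the Erd\H{o}s--Gillis/Lindeberg result (Theorem~\ref{c.zero.cap}) that convergence of $\sum_k 1/|\log l_k|$ forces zero capacity, explicitly noting this is a Cauchy--Schwarz consequence as in \cite[Thm.~1.3]{KQ}---precisely the argument you write out.
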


\begin{remark}\label{r.conjecture}
The phase transition in Theorem \ref{t.random.phase.transition} (and in Theorem  \ref{t.second.phase.transition} below) is analogous to the one observed in \cite{KQ} (see Equation \eqref{d.uniform.S} and Theorem \ref{t.Phase.transition} in Section \ref{s.motivation}). It is interesting to note that in the present paper we actually establish the full capacity at the critical point $\alpha=1,$ while for the setting in \cite{KQ} full capacity at the critical value $\alpha=2$ was only a conjecture.
\end{remark}

Theorem \ref{t.random.full} and Theorem \ref{t.random.phase.transition} follow from our deterministic results. Our first main deterministic result provides sufficient conditions for a $G_\delta$ set $S$, defined by \eqref{d.S}, to have full capacity on the unit interval:

\begin{theorem}[Sufficient conditions for full capacity]\label{t.e^-k.fullcap}
Assume that the intervals $\{I_k\}$ from \eqref{d.S} have exponentially decreasing lengths $l_k=e^{-\lambda k}$ for some fixed $\lambda>0$ and satisfy the assumptions \ref{a.distributed} - \ref{a.uniform} below. Then the $G_\delta$ set $S$ has full capacity on the unit interval:
\begin{align*}
 \Ca(S)=\Ca([0,1])>0.
\end{align*} 

\end{theorem}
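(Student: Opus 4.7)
Since $S\subset[0,1]$, the bound $\Ca(S)\leq \Ca([0,1])=1/4$ is immediate, and the content of the theorem is the reverse inequality. By the definition of capacity for non-compact sets given in the introduction, $\Ca(S)\geq 1/4$ will follow once we construct, for every $\eps>0$, a probability measure $\mu$ supported on a compact subset of $S$ whose logarithmic energy satisfies $I(\mu)\leq \log 4+\eps$. The plan is a Cantor-type hierarchical construction that produces a compact $K\subset S$ carrying a probability measure $\mu$ that closely mimics the equilibrium (arcsine) measure $\mu^*$ of $[0,1]$, for which $I(\mu^*)=\log 4$.

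Fix a rapidly growing sequence of thresholds $m_1<m_2<\cdots$. At level $j$, I would use assumption \ref{a.distributed} (the centers $\{c_k\}$ asymptotically distribute with respect to some absolutely continuous reference measure $\nu$ on $[0,1]$) together with \ref{a.uniform} (uniform control preventing clustering at every scale) to select a finite subfamily $\mathcal F_j$ of the intervals $I_k$ with $m_j\leq k<m_{j+1}$ that forms a net of $[0,1]$ on a scale $\delta_j\downarrow 0$, chosen so that each member of $\mathcal F_{j+1}$ is properly contained in some parent in $\mathcal F_j$. Set $K:=\bigcap_j \bigcup_{I\in\mathcal F_j}\overline I$, a decreasing intersection of compact sets; after the harmless replacement of each $I_k$ by a slightly smaller concentric open subinterval (so that the closure of the selected piece lies inside $I_k$), the inclusion $K\subset\bigcap_j\bigcup_{k\geq m_j} I_k=S$ holds, because the sets $U_m:=\bigcup_{k\geq m} I_k$ decrease with $m$ and $m_j\to\infty$. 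Build $\mu$ on $K$ by hierarchically distributing mass among the children of each $I\in\mathcal F_j$, with weights calibrated by the Radon-Nikodym derivative $d\mu^*/d\nu$ so that the projective limit approximates $\mu^*$; the absolutely continuous, positive density hypothesis in \ref{a.distributed} ensures this calibration is feasible.

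The energy $I(\mu)$ is estimated by partitioning pairs $(x,y)\in K^2$ according to their deepest common ancestor in the cascade tree: if $x$ and $y$ split at level $j$ whose parent interval has characteristic size $L_{j-1}$, then $-\log|x-y|=-\log L_{j-1}+O(1)$, and the $\mu\otimes\mu$-mass of such pairs is read off the cascade weights. Summing across scales, the leading term reproduces $I(\mu^*)=\log 4$, with three error contributions: (i) the self-energies of the finest intervals, each of order $\log(4/l_k)=\log 4+\lambda k\leq \log 4+\lambda m_{j+1}$, weighted by the squared level-$j$ mass and hence summable provided $m_{j+1}$ grows moderately compared with the cascade branching; (ii) the Riemann-sum error in approximating $I(\mu^*)$ by the discrete cascade integral; (iii) the finite-sample deviation of the positions in $\mathcal F_j$ from the limiting density $\nu$. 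Each can be driven below $\eps$ by sufficient growth of the thresholds $m_j$, yielding $I(\mu)\leq \log 4+\eps$. The main obstacle is the combined calibration behind (ii): because the equilibrium measure concentrates near the endpoints of $[0,1]$, emulating its profile through intervals drawn from an arbitrary absolutely continuous reference $\nu$ forces one to juggle non-uniform cascade weights at every scale simultaneously, and it is precisely here that \ref{a.uniform} is indispensable, providing the uniform density of available intervals at every level needed to realize those weights without failure.
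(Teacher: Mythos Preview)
Your proposal outlines a direct Cantor-type construction, which differs structurally from the paper's approach. The paper does not build a single hierarchical measure and estimate its energy globally; instead it isolates an abstract \emph{re-distribution} property (Definition~\ref{d.TEA}): given any probability measure $\nu$ with piecewise-continuous density on a finite union of the $I_k$, one can find $\nu'$ supported on $\supp\nu\cap V_n$ for arbitrarily large $n$ with $I(\nu')<I(\nu)+\eps$. Iterating this (Proposition~\ref{t.fullcapacity}) produces a weak-$*$ limit supported in $S$ with energy at most $\log 4+\eps$. The real work is verifying re-distributability (Proposition~\ref{p.S.alpha=1.total.energy}), and it is here that the paper's mechanism diverges from yours.

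The decisive obstacle---which your sketch does not resolve---is that a ``single-level'' re-distribution onto the intervals with indices in $\mathcal{A}_n=\{n,\dots,2n-1\}$ always carries an excess energy of order $O(1)$ from the self-interaction sum
\[
\frac{1}{n^2}\sum_{k\in\mathcal{A}_n}I(\mu_k)\;\asymp\;\frac{\lambda}{n^2}\sum_{k=n}^{2n-1}k\;\asymp\;\tfrac{3\lambda}{2}
\]
(Lemma~\ref{l.self.constant.one} and Remark~\ref{r.single.level.not.enough}). Your error term~(i) is the analogue of this quantity, and the assertion that it is ``summable provided $m_{j+1}$ grows moderately compared with the cascade branching'' is unsupported: the number of level-$j$ intervals is at most $m_{j+1}-m_j$ (each is some $I_k$ with $m_j\le k<m_{j+1}$), while the self-energy scales like $\lambda m_{j+1}$, so the ratio is bounded below and does not vanish. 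The paper dissolves this obstruction by a \emph{second} averaging: it takes the convex combination $\mu^m=\frac{1}{q(m)}\sum_{s<q(m)}\hat\mu_{2^s m}$ over $q(m)\to\infty$ dyadic scales, so that the uniformly bounded self-interactions are divided by $q(m)$ and disappear, while the cross-level interactions (shown under~\ref{a.distributed}--\ref{a.uniform} to converge to the target energy) survive. Your proposal contains no analogue of this device.

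Two further gaps. First, you never invoke assumption~\ref{a.summable} (log-average spacing), yet in the paper it is exactly what makes Lemma~\ref{l.SLLN.non.random} go through---the statement that the discrete sums $\frac{1}{n'n''}\sum_{i\neq j}(-\log|c_i-c_j|)f(c_i)f(c_j)$ converge to $I(f\phi\,dx)$. Without it your ``Riemann-sum error''~(ii) cannot be controlled near the diagonal, since~\ref{a.uniform} only gives $|c_i-c_j|\gtrsim e^{-\lambda m_{j+1}}$, which yields $-\log|c_i-c_j|\lesssim\lambda m_{j+1}$---far too crude. Second, your reading of~\ref{a.uniform} as providing ``uniform density of available intervals at every level'' is not what the assumption says: \ref{a.uniform} is the gap condition $(l_i+l_j)/(2|c_i-c_j|)<\eps$, used to ensure the relevant intervals are disjoint and to replace $-\log|x-y|$ by $-\log|c_i-c_j|$ up to $O(\eps)$ (Lemma~\ref{l.estimate.log}). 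It carries no density information; that comes entirely from~\ref{a.distributed}.
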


The assumptions that are imposed in this theorem, roughly speaking, state that these intervals are sufficiently  uniformly placed and sufficiently well-spaced (both in terms of ``average" and minimal distances between their centers).

First, we will pack intervals $\{I_k\}$ into groups with the indices from 
\begin{align}\label{d.A_n}
\mathcal{A}_n:=\{n,\ldots,2n-1\},
\end{align}
 and then pack these into larger groups:
 \begin{align}\label{d.A_n,q}
\mathcal{A}_{n,q(n)}:=\mathcal{A}_n\cup \mathcal{A}_{2n}\cup \cdots\cup \mathcal{A}_{2^{q}n},
\end{align}
where $q\in\N$. We will assume the following:

\begin{enumerate}[label=\textbf{A.\arabic*}]
\item \label{a.distributed} \textbf{(distribution)} The centers are \textit{distributed} with respect to some density function $\phi(x)\in L^1([0,1],\,dx),$ where $\phi(x)>0$ a.e.. Namely, for every $f\in C([0,1])$, we have  
\begin{align*}
\frac{1}{\#(\mathcal{A}_{n})}\sum_{k\in\mathcal{A}_n}f(c_k)\to \int_0^1 f(x)\phi(x)\,dx \quad \text{as}\quad n\rightarrow \infty.
\end{align*}
\end{enumerate}
Also, there exists a sequence $q(n)$ of integer numbers, such that $q(n)\to\infty$ as $n\to\infty$ and that the following two conditions hold:
\begin{enumerate}[label=\textbf{A.\arabic*}]\addtocounter{enumi}{1}
\item \label{a.summable} \textbf{($\log$-average spacing)} For every $\eps>0$, there exists $\delta>0$ such that for all $n$ large enough for any $n',n''\in \{n,2n,\dots, 2^{q(n)}n \}$ we have
\begin{align*}
\frac{1}{\#(\mathcal{A}_{n'}\times \mathcal{A}_{n''})}\sum(-\log|c_i-c_j|)<\eps,
\end{align*} 
where the sum is over $i\in\mathcal{A}_{n'}, j\in\mathcal{A}_{n''}$ such that $i\neq j$ and $|c_i-c_j|<\delta$.
\item \label{a.uniform}\textbf{(gap control)} For every $\eps>0$, for all large enough $n$, we have that for every $i,j\in \mathcal{A}_{n,q(n)} $ with $i\neq j$ the following holds:
\begin{align*}
\frac{l_{i}+l_j}{2|c_i-c_j|}<\eps.
\end{align*}

\end{enumerate}


Applying Theorem \ref{t.e^-k.fullcap}, we obtain our next result, a deterministic phase transition for the capacity. Again, take the centers $\{c_k\}$ that assumptions \ref{a.distributed}-\ref{a.uniform} are satisfied for the choice of lengths $l_k=e^{-\lambda k}$ for some fixed $\lambda>0$ and varying the speed at which the lengths of intervals decrease, we observe a sharp phase transition in the deterministic setting:

\begin{theorem}[Deterministic phase transition]\label{t.second.phase.transition}
Let the centers $\{c_k\}$ be the same centers from Theorem \ref{t.e^-k.fullcap}. Let $S$ be generated by $l_k:= e^{-\lambda k^\alpha}$ for $\alpha>0$. Then
\begin{enumerate}

\item $\Ca(S)=\Ca([0,1])>0$ for $0<\alpha\leq 1$,
\item $\Ca(S)=0$ for $1<\alpha$.

\end{enumerate}
\end{theorem}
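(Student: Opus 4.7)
The plan is to derive Theorem \ref{t.second.phase.transition} from Theorem \ref{t.e^-k.fullcap} at the critical exponent $\alpha=1$, using elementary monotonicity for the subcritical range $0<\alpha<1$ and an elementary capacity upper bound for the supercritical range $\alpha>1$. At $\alpha=1$ the claim is literally Theorem \ref{t.e^-k.fullcap}, since the centers were chosen to satisfy its hypotheses. For $0<\alpha<1$ one has $k^\alpha\le k$, so $l_k^{(\alpha)}:=e^{-\lambda k^\alpha}\ge e^{-\lambda k}=:l_k^{(1)}$ for every $k\ge 1$. Writing $S^{(\alpha)}$ and $S^{(1)}$ for the $G_\delta$ sets built from the same centers with these two length choices, the larger lengths force $I_k^{(\alpha)}\supset I_k^{(1)}$ and therefore $S^{(\alpha)}\supset S^{(1)}$. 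Since capacity is monotone (every admissible probability measure for the smaller set is admissible for the larger), $\Ca(S^{(\alpha)})\ge\Ca(S^{(1)})=\Ca([0,1])$, while the reverse inequality follows from $S^{(\alpha)}\subset[0,1]$.

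For the supercritical part, I will use a harmonic subadditivity of the Robin constant $V(E):=-\log\Ca(E)$: for compact $K_1,\ldots,K_N\subset[0,1]$,
\begin{align*}
\frac{1}{V\bigl(\bigcup_{i=1}^N K_i\bigr)}\le\sum_{i=1}^N\frac{1}{V(K_i)}.
\end{align*}
The $N=2$ version is a short convexity computation. Decompose any probability measure $\mu$ on $K_1\cup K_2$ as $\mu=\mu_1+\mu_2$ with $\mu_i=\mu|_{K_i}$ of total masses $\lambda$ and $1-\lambda$. Since $K_1\cup K_2\subset[0,1]$, one has $-\log|x-y|\ge 0$, so the cross energy $I(\mu_1,\mu_2)\ge 0$. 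This yields $I(\mu)\ge\lambda^2 V(K_1)+(1-\lambda)^2 V(K_2)$, and optimizing in $\lambda$ produces $V(K_1\cup K_2)\ge V(K_1)V(K_2)/(V(K_1)+V(K_2))$, which is the stated inequality for $N=2$. Induction extends it to all finite $N$.

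With the lemma in hand, let $K\subset S$ be any compact subset. Since $K\subset\bigcup_{k\ge m}I_k$ for every $m$ and the $I_k$ are open, compactness provides a finite subcover $K\subset\bigcup_{k=m}^{N(m)}\overline{I_k}$, whence
\begin{align*}
\frac{1}{V(K)}\le\sum_{k=m}^{N(m)}\frac{1}{V(\overline{I_k})}\le \sum_{k\ge m}\frac{1}{\log 4 + \lambda k^\alpha}.
\end{align*}
For $\alpha>1$ this tail converges to $0$ as $m\to\infty$, so $V(K)=\infty$ and $\Ca(K)=0$. Because the excerpt's definition of capacity only uses probability measures of compact support, $\Ca(S)=\sup\{\Ca(K):K\subset S\text{ compact}\}=0$. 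The main technical obstacle is the subadditivity lemma, namely honestly verifying it for $N=2$ and bridging from the countable cover of the $G_\delta$ to a finite cover of each compact $K\subset S$; the rest is routine.
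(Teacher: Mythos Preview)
Your argument is correct and matches the paper's: $\alpha=1$ is Theorem~\ref{t.e^-k.fullcap}, $0<\alpha<1$ follows by monotonicity of capacity since $S^{(\alpha)}\supset S^{(1)}$, and $\alpha>1$ follows from the convergence of $\sum_k 1/|\log l_k|$, which the paper invokes as the Erd\H{o}s--Gillis criterion (Theorem~\ref{c.zero.cap} and its corollary). Your harmonic-subadditivity lemma for the Robin constant is exactly the Cauchy--Schwarz proof of that criterion that the paper cites from~\cite{KQ}, so you have simply supplied a self-contained version of what the paper quotes.
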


The next theorem states that both the random theorems (Theorem \ref{t.random.full} and Theorem \ref{t.random.phase.transition}) follow from the deterministic theorems (Theorem \ref{t.fullcapacity} and Theorem \ref{t.second.phase.transition}):

\begin{theorem}\label{t.random.assumptions}
Let $\{c_k\}$ be i.i.d. with an absolutely continuous distribution on any interval $J$ with almost everywhere positive and uniformly bounded density function. Take $l_k=e^{-\lambda k}$ for some fixed $\lambda>0$. Then, almost surely assumptions \ref{a.distributed}-\ref{a.uniform} are satisfied.
\end{theorem}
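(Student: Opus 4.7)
The plan is to verify assumptions \ref{a.distributed}, \ref{a.summable}, \ref{a.uniform} in turn after fixing a slowly growing sequence $q(n)\to\infty$ (for concreteness, $q(n)=\lfloor\log_2\log_2 n\rfloor$ will suffice; any sufficiently slow choice works). By Remark~\ref{r.interval.WLOG} I may assume $J=[0,1]$, with density $\phi$ satisfying $\|\phi\|_\infty\le M<\infty$ and $\phi>0$ a.e.

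Assumption \ref{a.distributed} is immediate from Kolmogorov's strong law. Writing
\[
\frac{1}{n}\sum_{k\in\mathcal{A}_n}f(c_k)=2\cdot\frac{1}{2n}\sum_{k=1}^{2n-1}f(c_k)-\frac{1}{n}\sum_{k=1}^{n-1}f(c_k),
\]
both pieces converge a.s.\ to $\int_0^1 f\phi\,dx$ for each fixed $f\in C([0,1])$. Taking $f$ in a countable dense subset of $C([0,1])$ (e.g.\ rational-coefficient polynomials), intersecting the resulting null sets, and using uniform approximation gives \ref{a.distributed}. Assumption \ref{a.uniform} follows from a direct union bound: its failure forces $|c_i-c_j|\le e^{-\lambda n}/\eps$ for some pair $i\ne j$ in $\mathcal{A}_{n,q(n)}$, an event of probability at most $2Me^{-\lambda n}/\eps$ per pair by independence and boundedness of $\phi$, hence at most $C\cdot 4^{q(n)} n^2 e^{-\lambda n}/\eps$ after a union bound over the $O(4^{q(n)}n^2)$ pairs. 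This is summable in $n$, so Borel--Cantelli applied along a countable sequence $\eps_k\downarrow 0$ yields \ref{a.uniform} almost surely.

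The main work is in \ref{a.summable}. Given $\eps>0$, pick $\delta>0$ such that $\E[h(c_1,c_2)]<\eps/18$, where $h(x,y):=(-\log|x-y|)\indicator_{|x-y|<\delta}$; this is possible by dominated convergence since $\|\phi\|_\infty\le M$ implies $-\log|c_1-c_2|$ is integrable. It suffices to show that the bilinear statistic
\[
A(n',n''):=\frac{1}{|\mathcal{A}_{n'}||\mathcal{A}_{n''}|}\sum_{\substack{i\in\mathcal{A}_{n'},\,j\in\mathcal{A}_{n''}\\ i\ne j}} h(c_i,c_j)
\]
converges a.s.\ to $\E[h(c_1,c_2)]$ as $\min(n',n'')\to\infty$, from which \ref{a.summable} follows at once (take $n$ large enough that every $n'\in\{n,2n,\ldots,2^{q(n)}n\}$ exceeds the resulting threshold). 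The convergence I would derive via a Hoeffding-type decomposition $h=\E h+(\phi_1(x)-\E h)+(\phi_1(y)-\E h)+r(x,y)$, where $\phi_1(x)=\E h(x,c_2)$ and $\E[r(c_1,c_2)\mid c_1]=\E[r(c_1,c_2)\mid c_2]=0$. The two linear marginal averages converge by the one-dimensional SLLN, while for the degenerate residual the vanishing conditional expectations kill the cross-covariances, yielding $\var(A(n',n''))=O(1/\min(n',n''))$. Chebyshev combined with Borel--Cantelli, restricted to the dyadic pairs $(2^a,2^b)$ with $|a-b|\le q(2^{\min(a,b)})$ (the slow growth of $q$ makes the tail sum $\sum 2^{q(2^a)}/2^a$ converge), gives a.s.\ convergence on this restricted grid. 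For non-dyadic indices the inclusion $\mathcal{A}_n\subset\mathcal{A}_{2^m}\cup\mathcal{A}_{2^{m+1}}$ for $n\in[2^m,2^{m+1})$ bounds $A(n',n'')$ above by a uniform constant multiple (at most $9$) of a dyadic bilinear average, extending the convergence to all $(n',n'')$.

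The principal obstacle is the uniformity in \ref{a.summable}: Chebyshev yields only polynomial variance decay, so a naive union bound over all $n$ is not summable, and one must exploit the slow growth of $q(n)$ to reduce to a summable family of dyadic pairs before interpolating to non-dyadic indices via the dyadic sandwich. Everything else reduces to routine applications of the SLLN and Borel--Cantelli.
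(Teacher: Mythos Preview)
Your proof is correct and follows the same overall strategy as the paper: both verify \ref{a.distributed} by the strong law, \ref{a.uniform} by a union bound plus Borel--Cantelli (the paper also takes $q(n)=[\log_2\log n]$), and \ref{a.summable} by a Hoeffding decomposition followed by a moment bound on the degenerate kernel. The one substantive difference is in this last step: the paper bounds the \emph{fourth} moment of the centered bilinear sum $S_{n',n''}$, obtaining a tail of order $n^{-4}$ which is directly summable over all $n$ and all $(q(n)+1)^2$ admissible pairs, whereas you work with the variance. Two remarks on your version. First, your variance bound for the degenerate residual is suboptimal: because the double centering $\E[r\mid c_i]=\E[r\mid c_j]=0$ kills every cross term with exactly one shared index, one actually has $\var\bigl(\tfrac{1}{n'n''}\sum r(c_i,c_j)\bigr)=O(1/(n'n''))$, not merely $O(1/\min(n',n''))$; with the sharper bound the sum $\sum_n\sum_{s,t\le q(n)} (2^{s+t}n^2)^{-1}\le 4\sum_n n^{-2}$ already converges, so your dyadic-grid interpolation is unnecessary and the second-moment route is as clean as the paper's fourth-moment one. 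Second, the sandwich $\mathcal{A}_n\subset\mathcal{A}_{2^m}\cup\mathcal{A}_{2^{m+1}}$ only yields an \emph{upper} bound on $A(n',n'')$ (since $h\ge 0$), not the two-sided convergence you announce; this is harmless because \ref{a.summable} only asks for an upper bound and your choice $\E h<\eps/18$ absorbs the factor $9$, but the phrase ``extending the convergence to all $(n',n'')$'' overstates what the sandwich delivers.
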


\subsection{Motivation and historical background}\label{s.motivation}


In this section, we will discuss the motivation behind our project and the historical background. 

Gorodetski and Kleptsyn in \cite[Section 1.2]{GK} studied the set of exceptional energies in the parametric version of the Furstenberg theorem. Consider
$$
T_{n,\omega,a}:=A_{\omega_n}(a)\dots A_{\omega_1}(a)
$$
where matrices~$A_{\omega_k}(a)\in SL(2,\R)$ are i.i.d., depending on a parameter $a$, taking values in some interval $J\subset \R$. Furstenberg's theorem implies that for every $a\in J$, for almost every $\omega$, we get
\begin{align}\label{e.furstenberg.limit}
\lim_{n\to\infty} \frac{1}{n} \log \|T_{n,\omega,a}\| = \lambda_F(a)>0.
\end{align}

Questions on switching the quantifiers in the limit appear naturally \emr{in spectral theory, specifically, in Anderson localization proofs.}

In~\cite[Theorem~1.5]{GK}, the authors proved that almost surely switching the quantifiers leads to the occurrence of a different kind of behavior. Namely, under some technical assumptions, it was shown that for almost every $\omega$, there exists some random \textit{exceptional energies} subset of parameters $S_e(\omega)\subset J$ such that \eqref{e.furstenberg.limit} does not hold. Additionally, there also exists a smaller set of parameters $G_{\delta}$-set $S_0(\omega)$ such that for all $a\in S_0(\omega)$, we have
$$
\liminf_{n\to\infty} \frac{1}{n} \log \|T_{n,\omega,a}\| =0.
$$
Both these sets are random $G_\delta$'s of the form \eqref{d.S}. 

Additionally, in \cite{GK} it was shown that the set $S_e(\omega)$ (and thus $S_0(\omega)$) have zero Hausdorff dimension. Capacity is a finer measurement than the Hausdorff dimension in the sense that any set $E\subset \C$ that has zero capacity must have zero Hausdorff dimension. The question as to what is the capacity of both $S_e(\omega)$ and $S_0(\omega)$ is still open. If one can show that those sets satisfy assumptions \ref{a.distributed}-\ref{a.uniform} (and this is what we conjecture), our Theorem \ref{t.e^-k.fullcap} will imply that these sets have full capacity, that is $\Ca(S_e(\omega))=\Ca(S_0(\omega))=\Ca(J)$,
in the same way as we get full capacity in the ``toy model" Theorem~\ref{t.random.phase.transition}.

The capacity of such $G_\delta$'s is also interesting because it showcases a phase transition. That is, a drastic transition from zero capacity to full capacity precisely when the series 
\begin{align}\label{e.log.sum}
\sum_k \frac{1}{|\log l_k|}
\end{align}
transitions from convergent to divergent. As we mentioned above, capacity gauges how far away a set is from being polar. Hence, as we change the speed of intervals so that the series \eqref{e.log.sum} transitions from convergent to divergent, $S$ goes from being polar to being as far away as possible from polar, there is no middle ground. This transition was first noticed by Kleptsyn and Quintino (see \cite{KQ}) in the case when the centers $\{c_k\}$ are equidistributed in the following way: for every $n$ we consider $n$ equally spaced centers:
$$c_{j,n} =\frac{2j+1}{2n} \quad \text{for every} \quad j=0,\ldots,n-1,$$ 
and with the restriction that the corresponding interval $J_{j,n}$ have the same length $r_n$ for $j=0,\ldots,n-1$. The \emph{uniform $G_\delta$-set}~$\widetilde{S}$, corresponding to the sequence~$r_n$, is given by
\begin{equation}\label{d.uniform.S}
\widetilde{S}:=\bigcap_{m=1}^\infty\bigcup_{n=m}^\infty  \bigcup_{j=0}^{n-1} J_{j,n}.
\end{equation}
Any uniform $G_\delta$ set $\widetilde{S}$ may be written in the generic setting \eqref{d.S} by ordering $J_{j,n}$ and re-labeling. They noticed that there is a ``phase transition" in which $\widetilde{S}$ goes from having zero capacity to {full capacity}:
\begin{theorem}[Phase transition {\cite[Theorem 1.2]{KQ}}]\label{t.Phase.transition}
For $r_n=e^{-n^\alpha}$,
\begin{enumerate}
\item if $\alpha >2$, then $\Ca(\widetilde{S})=0$,
\item if $\alpha <2$, then $\Ca(\widetilde{S})=\Ca([0,1])$.
\end{enumerate}
\end{theorem}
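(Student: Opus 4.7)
My plan is to prove the two parts separately. For $\alpha>2$ the upper bound $\Ca(\widetilde{S})=0$ follows from a covering/subadditivity argument; for $\alpha<2$ the lower bound $\Ca(\widetilde{S})\geq 1/4=\Ca([0,1])$ follows from constructing compact subsets of $\widetilde{S}$ that carry probability measures with energy close to $\log 4=I(\muo)$.

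For part (1), I would invoke the countable subadditivity of logarithmic capacity (see, e.g., \cite[Theorem~5.1.4]{Ra}): for a set $E$ of diameter less than $1$ and a cover $E\subset\bigcup_k E_k$,
\begin{equation*}
\frac{1}{\log(1/\Ca(E))}\leq\sum_k\frac{1}{\log(1/\Ca(E_k))}.
\end{equation*}
Applied to $\widetilde{S}\subset\bigcup_{n\geq m}\bigcup_{j=0}^{n-1}J_{j,n}$, with $\Ca(J_{j,n})=r_n/4=\tfrac14 e^{-n^\alpha}$, this yields
\begin{equation*}
\frac{1}{\log(1/\Ca(\widetilde{S}))}\leq\sum_{n\geq m}\frac{n}{n^\alpha+\log 4}.
\end{equation*}
The right-hand side is the tail of the series $\sum n^{1-\alpha}$, which converges precisely when $\alpha>2$ and whose tail then vanishes as $m\to\infty$. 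Hence $\log(1/\Ca(\widetilde{S}))=\infty$, i.e.\ $\Ca(\widetilde{S})=0$.

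For part (2), I would fix a rapidly growing sequence $n_1<n_2<\ldots$ chosen so that $n_{k+1}r_{n_k}\to\infty$ (so that each level-$n_k$ interval contains many level-$n_{k+1}$ centers), and consider the nested intersection
\begin{equation*}
K:=\bigcap_{k=1}^\infty\bigcup_{j=0}^{n_k-1}J_{j,n_k}\subset\widetilde{S},
\end{equation*}
which is compact. I would place on $K$ the weak limit $\mu$ of probability measures $\mu_k$, where $\mu_k$ is supported on the stage-$k$ union and spreads inside each surviving $J_{j,n_k}$ a uniform mass equal to the arcsine equilibrium measure $\muo$ of $[j/n_k,(j+1)/n_k]$. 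By construction $\mu_k$ converges weakly to $\mu$, $\mu$ projects to $\muo$ at every fixed macroscopic scale, and $\supp\mu\subset K\subset\widetilde{S}$. It then suffices to show $I(\mu_k)\to I(\muo)=\log 4$, which via lower semi-continuity of the energy delivers $\Ca(\widetilde{S})\geq\Ca(K)\geq 1/4$.

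The principal obstacle is the energy estimate. Splitting $\iint(-\log|x-y|)\,d\mu_k\,d\mu_k$ by the deepest level $\ell\leq k$ at which $x$ and $y$ share a common ancestor $J_{j,n_\ell}$, the cross-scale contributions converge to $I(\muo)$ by a Riemann-sum approximation of the arcsine energy. The delicate term is the self-energy within each $J_{j,n_k}$: an interval of length $r_{n_k}=e^{-n_k^\alpha}$ carrying mass $m_{j,k}$ contributes of order $m_{j,k}^{2}n_k^\alpha$, so after summing over surviving intervals the stage-$k$ error involves $n_k^\alpha/N_k$, where $N_k\asymp n_k\prod_{i<k}n_{i+1}r_{n_i}$ is the number of survivors. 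The need to make this vanish while preserving weak convergence to $\muo$ ties the choice of $n_k$ to the same critical series $\sum n^{-\alpha}$ that appears in \eqref{e.log.sum}; the condition $\alpha<2$ is what affords this balance, while $\alpha>2$ obstructs it, matching the dichotomy of part~(1). This is the mechanism behind the sharp phase transition at $\alpha=2$.
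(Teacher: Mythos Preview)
This theorem is not proved in the present paper; it is quoted from \cite{KQ}. The paper only indicates the method: part~(1) is attributed to the Erd\H{o}s--Gillis/Lindeberg bound (Theorem~\ref{c.zero.cap} and its corollary), and part~(2) to the re-distribution technique recalled in Section~\ref{s.re-distribution-technique}.

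Your argument for part~(1) is correct and is essentially a reformulation of the Erd\H{o}s--Gillis bound: countable subadditivity of $1/\log(1/\Ca(\cdot))$ applied to the cover $\bigcup_{n\ge m}\bigcup_j J_{j,n}$ gives exactly the tail of $\sum n^{1-\alpha}$, matching the paper's account.

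Your sketch for part~(2), however, has a genuine gap. The re-distribution technique of \cite{KQ} (and of the present paper) is iterative: one starts from $\nu_0$ with density close to the arcsine law, and at each step constructs $\nu_{m+1}$ supported on $\supp\nu_m\cap V_{n_{m+1}}$ with $I(\nu_{m+1})<I(\nu_m)+\eps_m$, the $\eps_m$ summable. The key point is that the energy \emph{increment} at each step, not the total self-energy of $\mu_k$, is what must be controlled. Your one-shot Cantor construction instead requires $n_k^\alpha/N_k\to 0$ with $N_k\asymp n_1\prod_{i<k}(n_{i+1}r_{n_i})$. If you pick $n_{k+1}=e^{cn_k^\alpha}$ for $c>1$ (the natural choice to make $n_{k+1}r_{n_k}\to\infty$), then $N_k\asymp e^{(c-1)\sum_{i<k}n_i^\alpha}\asymp e^{(c-1)n_{k-1}^\alpha}$ while $n_k^\alpha=e^{c\alpha n_{k-1}^\alpha}$, so $n_k^\alpha/N_k\to\infty$ whenever $c\alpha>c-1$, i.e.\ for every $c$ once $\alpha\ge 1$. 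Thus your construction as written does not reach the full range $\alpha<2$; the asserted link between the error term and the series $\sum n^{1-\alpha}$ is not established. The re-distribution technique avoids this by never comparing the final self-energy to a single denominator $N_k$: it absorbs the self-interaction cost level by level, which is why the telescoping in the proof of Proposition~\ref{t.fullcapacity} and the multi-level averaging of Section~\ref{s.critical} are needed rather than a direct Cantor-type estimate.
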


We refer to $\alpha=2$ as the critical case because it is precisely when the sum~\eqref{e.log.sum} transitions from convergent to divergent. Note that there are $n$ intervals of length $e^{-n^{\alpha}}$ in~\eqref{d.uniform.S}, and that is why the critical case is $\alpha=2$ in Theorem \ref{t.Phase.transition} and not $\alpha=1$. Also, note that full capacity of $\widetilde{S}$ in the critical case in \cite{KQ} was \emph{conjectured}, but not proved; contrary to this, in the setting of the present paper the analogous statement for the critical $\alpha=1$ is \emph{established} (see Remark~\ref{r.conjecture}). 

The zero capacity part in all the theorems above goes back to the works in the first half of twentieth century: a 1918 paper by Lindeberg~\cite{Lin} and 1937 by Erd\"os and Gillis~\cite{E-G}. They were working on connecting the notion of the $h$-\emph{volume} of a set with the \textit{logarithmic capacity} of a set. A function $h$ that is defined in some right neighborhood of $0$ is called a \emph{measuring function} provided that $h$ is continuous, positive, increasing, concave, and $h(0)=0$. The $h$-\emph{volume} of a set $E\subset \R$ is defined as
$$
m_h(E):=\lim_{\eps \to 0+} \, \inf_{\{(x_j,r_j)_{j\in\N}\} \in \mathcal{I}(E,\eps)} \sum_j h(r_j),
$$
where the infimum is taken over the set $\mathcal{I}(E,\eps)$ of covers of $E$ by balls of diameter less than $\eps$:
$$
\mathcal{I}(E,\eps) = \left\{(x_j,r_j)_{j\in \N} \mid \bigcup_j U_{r_j}(x_j)\supset E, \quad \forall j \quad r_j<\eps \right\}.
$$
In particular, the function $h_0(x)=\frac{1}{|\log x|}$ provided the following link: 
\begin{theorem}[\mbox{Erd\"os and Gillis~\cite[p.~187]{E-G}, generalizing Lindeberg~\cite[p.~27]{Lin}}]\label{c.zero.cap}

If for a set E one has $m_{h_0}(E) < +\infty$, then $\Ca(E) = 0$.
\end{theorem}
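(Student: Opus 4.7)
My approach is to argue the contrapositive: assume $\Ca(E) > 0$ and derive $m_{h_0}(E) = +\infty$. Positive capacity yields a probability measure $\mu$ supported on a compact subset of $E$ with finite logarithmic energy $I(\mu) < \infty$. The layer-cake identity
\[
I(\mu) = \int_0^\infty \varphi(t)\,dt, \qquad \varphi(t) := \int \mu\bigl(B(x, e^{-t})\bigr)\,d\mu(x),
\]
combined with the elementary tail averaging $\inf_{[T, 2T]} \varphi \le T^{-1} \int_T^{2T} \varphi \to 0$, produces a sequence $t_n \to \infty$ along which $t_n\,\varphi(t_n) \to 0$. This rate is the correct match for the decay of $h_0$.

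Supposing for contradiction that $m_{h_0}(E) \le M < \infty$, the hypothesis furnishes, for each $n$, a cover $\{B(x_j^{(n)}, r_j^{(n)})\}_j$ of $E$ with all $r_j^{(n)} < \tfrac{1}{2} e^{-t_n}$ and $\sum_j h_0(r_j^{(n)}) \le M + 1$; a Vitali extraction makes the cover pairwise disjoint at the cost of a bounded inflation of the $h_0$-sum. Since each $B_j^{(n)}$ has diameter at most $e^{-t_n}$ and lies inside $B(x, e^{-t_n})$ for any $x \in B_j^{(n)}$, integrating against $\mu$ and using the disjointness yields
\[
\varphi(t_n) \;\ge\; \sum_j \mu\bigl(B_j^{(n)}\bigr)^2.
\]
A weighted Cauchy--Schwarz, applied after partitioning the cover into scale-classes $S_k := \{j : r_j^{(n)} \in [e^{-(t_n+k+1)}, e^{-(t_n+k)}]\}$, would convert the constraints $\sum_j \mu(B_j^{(n)}) \ge 1$ and $\sum_j h_0(r_j^{(n)}) \le M+1$ into a lower bound of the form $\sum_j \mu(B_j^{(n)})^2 \gtrsim_M 1/t_n$; combined with the previous upper bound this gives $t_n\varphi(t_n) \gtrsim_M 1$, contradicting the choice of $t_n$.

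The main obstacle is precisely this last lower bound. The naive unweighted Cauchy--Schwarz $\sum \mu(B_j^{(n)})^2 \ge 1/N_n$ is useless without an upper bound on the number of balls $N_n$, and the constraint $\sum h_0(r_j^{(n)}) \le M + 1$ permits arbitrarily many very small balls at essentially no $h_0$-cost. Getting past this requires the scale partition above together with the finite-energy condition on $\mu$, which prevents $\mu$-mass from concentrating in very small balls, and thereby forces most of the mass into scale-classes with $r_j^{(n)} \asymp e^{-t_n}$. Making this dichotomy rigorous -- and tracking how the decay rate $1/|\log r|$ just barely produces the required contradiction -- is the most delicate part of the argument and is what singles out $h_0$ as the sharp threshold function for logarithmic capacity.
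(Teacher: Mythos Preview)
Your argument has a real gap at exactly the point you flag. After disjointifying the cover (incidentally, a Vitali lemma is not what you want here---it produces a disjoint \emph{subfamily} whose dilates cover, not a disjoint cover; simply pass to a finite subcover of the compact support and replace each $B_j$ by $B_j' := B_j \setminus \bigcup_{i<j} B_i$), you obtain $\varphi(t_n) \ge \sum_j \mu(B_j')^2$ and then need $\sum_j \mu(B_j')^2 \gtrsim_M 1/t_n$. But the constraints $\sum_j \mu(B_j') = 1$ and $\sum_j 1/|\log r_j| \le M+1$ do not force this: the cover may consist of balls at scales $r_j \ll e^{-t_n}$, and if the $\mu$-mass is spread over $N \approx (M+1)\,|\log r_j|$ such balls of equal mass $1/N$, then $\sum_j \mu(B_j')^2 \approx 1/((M+1)|\log r_j|) \ll 1/t_n$. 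Your scale decomposition $S_k$ does not by itself rule this out, and the heuristic that finite energy ``forces mass into $S_0$'' is not an argument.

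The repair is to track the truncated energy rather than $\varphi$. Set
\[
I_\delta(\mu) := \iint_{|x-y|<\delta} (-\log|x-y|)\,d\mu(x)\,d\mu(y),
\]
which tends to $0$ as $\delta\to 0$ since $I(\mu)<\infty$. For a disjointified cover $\{B_j'\}$ with radii $r_j<\delta/2$ one has, for $x,y\in B_j'$, the bound $-\log|x-y| \ge |\log(2r_j)|$, whence
\[
I_\delta(\mu) \;\ge\; \sum_j |\log(2r_j)|\,\mu(B_j')^2 \;\ge\; \tfrac{1}{2}\sum_j |\log r_j|\,\mu(B_j')^2.
\]
Now the \emph{weighted} Cauchy--Schwarz
\[
1 = \Bigl(\sum_j \mu(B_j')\Bigr)^2 \le \Bigl(\sum_j |\log r_j|\,\mu(B_j')^2\Bigr)\Bigl(\sum_j \frac{1}{|\log r_j|}\Bigr) \le (M+1)\sum_j |\log r_j|\,\mu(B_j')^2
\]
gives a uniform lower bound $I_\delta(\mu) \ge \tfrac{1}{2(M+1)}$, contradicting $I_\delta(\mu)\to 0$. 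This is the one-step Cauchy--Schwarz argument the paper alludes to (citing~\cite{KQ}); the paper itself does not write out a proof. In your notation, $I_{e^{-t}}(\mu) = t\,\varphi(t) + \int_t^\infty \varphi(s)\,ds$, so the missing ingredient is precisely the tail $\int_t^\infty \varphi$; keeping it supplies the weight $|\log r_j|$ that makes Cauchy--Schwarz match the $h_0$-constraint directly, eliminating any need for a scale decomposition.
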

This was later re-proved by Carleson~\cite{Car}, and noticed in~\cite[Thm.~1.3]{KQ} to be a corollary of the Cauchy-Schwarz inequality. Theorem~\ref{c.zero.cap} immediately implies (see~\cite[Corollary~1.4]{KQ}):
\begin{cor}
Let $S$ be defined by \eqref{d.S}. 
If the series $\sum_k \frac{1}{|\log l_k|}$ converges, then the set ${S}$ is of zero capacity.
\end{cor}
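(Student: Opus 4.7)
The plan is to observe that this corollary is essentially an immediate application of Theorem~\ref{c.zero.cap}, so the work reduces to exhibiting a good cover of $S$ by intervals and estimating its $h_0$-volume for $h_0(x)=1/|\log x|$.

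First, I would use the very definition of $S$ to produce natural covers. By \eqref{d.S}, for every $m$ the family $\{I_k\}_{k\ge m}$ covers $S$. Since $l_k\to 0$, given any $\eps>0$ I can choose $m=m(\eps)$ so large that $l_k<\eps$ for all $k\ge m$. Then $\{(c_k,l_k)\}_{k\ge m}$ is an admissible element of $\mathcal{I}(S,\eps)$, and by definition of the $h_0$-volume
\begin{equation*}
m_{h_0}(S)\ \le\ \sum_{k\ge m}h_0(l_k)\ =\ \sum_{k\ge m}\frac{1}{|\log l_k|}.
\end{equation*}

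Next, I would let $m\to\infty$. The hypothesis that $\sum_k 1/|\log l_k|$ converges forces the tails on the right-hand side to tend to $0$, and hence $m_{h_0}(S)=0$. In particular $m_{h_0}(S)<+\infty$, so Theorem~\ref{c.zero.cap} applies and gives $\Ca(S)=0$.

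There is essentially no obstacle: the argument is a direct pairing of the canonical cover coming from the $G_\delta$ structure with the Erd\H{o}s--Gillis criterion. The only minor point to be careful about is that $l_k$ is the \emph{length} of $I_k$ while the definition of $\mathcal{I}(E,\eps)$ uses diameters of covering balls, but since $I_k$ is an interval of diameter $l_k$ the two agree (and one may harmlessly enlarge each $I_k$ slightly to an open ball of the same diameter if desired). The same argument works verbatim if one replaces $[0,1]$ by any bounded interval as noted in Remark~\ref{r.interval.WLOG}.
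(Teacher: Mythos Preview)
Your proof is correct and follows exactly the approach the paper indicates: the paper does not spell out a proof but states that the corollary follows immediately from Theorem~\ref{c.zero.cap} (citing \cite[Corollary~1.4]{KQ}), and your argument---using the tail cover $\{I_k\}_{k\ge m}$ of $S$ to bound $m_{h_0}(S)$ by the tail of the convergent series---is precisely how that implication is realized.
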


In the same 1937 paper, Erd\"os and Gillis~\cite[(C), p. 186]{E-G} have mentioned a conjecture, going back to Nevanlinna's paper~\cite{Nev}, that aimed at generalizing Theorem~\ref{c.zero.cap} to other $h$-volume settings. This conjecture was disproved by Ursell \cite{Urs}; the re-distribution construction that was used in~\cite{KQ} and that we are using in the present paper can be seen as an extension of his technique.

%


\subsection{Sketch of the proof and plan of the paper}\label{s.plan} In this section, we will give a sketch of the proofs and end with the plan of the paper.

The statement in the phase transition theorems \ref{t.random.phase.transition} and \ref{t.second.phase.transition} for  $\alpha>1$ is a result from {\cite{KQ}} and does not require assumptions \ref{a.distributed} - \ref{a.uniform} (see Theorem \ref{c.zero.cap} above).

Due to monotonocity of capacity, the statement in the phase transition theorems \ref{t.random.phase.transition} and \ref{t.second.phase.transition} for $0<\alpha<1$ follows by establishing full capacity for $\alpha=1$. For the deterministic phase transition this is Theorem \ref{t.e^-k.fullcap}. For the random phase transition the result follows from Theorem \ref{t.e^-k.fullcap} by showing that assumptions \ref{a.distributed}-\ref{a.uniform} hold almost surely, that is Theorem \ref{t.random.assumptions}.

In Section \ref{s.random.phase.transition}, we will
show that the centers from the random phase transition satisfy assumptions \ref{a.distributed}~-~\ref{a.uniform} for $\alpha=1$ (Theorem \ref{t.random.assumptions}). Hence, the random phase transition holds. 

Thus, the main task is to show that $S$ has full capacity for $\alpha=1$ (Theorem \ref{t.e^-k.fullcap}). The method that we will employ to show full capacity is the \emph{re-distribution technique} under assumptions \ref{a.distributed}-\ref{a.uniform} for $\alpha=1$. 

We introduce this technique in Section~\ref{s.fullcapacity}. Namely, we will begin with the
equilibrium measure $\nu_J$ on the interval $J$, then we will construct a probability measure $\nu_1$  such that the energy $I(\nu_1)$ approximates the energy $I(\nu_J)$ and whose support is a subset of $\supp\nu_J$ and is a finite union of intervals $\{I_k\}$. Then we will construct another probability measure $\nu_2$  such that the energy $I(\nu_2)$ approximates the energy $I(\nu_1)$ and whose support is a subset of $\supp\nu_1$ and is a finite union of intervals $\{I_k\}$. Inductively, repeating this procedure, we get a sequence of probability measures that have their energies that are arbitrarily close to $I(\nu_J)$ and such that their supports create a decreasing sequence of compact subsets. After passing to the weak-limit we obtain a measure supported on $S$ (Proposition \ref{t.fullcapacity}), thus proving the desired full capacity for the set $S$ (see Section \ref{s.re-dist.proof} for the proof). Proposition \ref{p.S.alpha=1.total.energy} states that the above technique is applicable when assumptions \ref{a.distributed}-\ref{a.uniform} are satisfied. Hence, Theorem~\ref{t.e^-k.fullcap} follows from Proposition~\ref{t.fullcapacity} and Proposition~\ref{p.S.alpha=1.total.energy}.

Finally, in Section \ref{s.main.proof} we develop the tools to prove Proposition~\ref{p.S.alpha=1.total.energy}. In Section~\ref{s.critical} we conclude with the proof of Proposition~\ref{p.S.alpha=1.total.energy}.


\section{In the random setting \ref{a.distributed}-\ref{a.uniform} are a.s. satisfied}
\label{s.random.phase.transition}

This section is devoted to the proof of Theorem~\ref{t.random.assumptions}: we assume that 
the centers $\{c_k\}$ are i.i.d. random variables and show that if their distributions are nice, then assumptions~\ref{a.distributed}-\ref{a.uniform} are satisfied.

The distribution immediately follows from the law of large numbers:
\begin{lemma}\label{l.A1sat}
Under the assumptions of Theorem~\ref{t.random.assumptions}, assumption~\ref{a.distributed} is almost surely satisfied.
\end{lemma}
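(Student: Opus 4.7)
The plan is to combine the Strong Law of Large Numbers with the separability of $C([0,1])$. By Remark~\ref{r.interval.WLOG} we may take $J = [0,1]$, so that $\phi$ lives on $[0,1]$ and $f(c_k)$ makes sense for $f \in C([0,1])$. First I would fix $f$ and apply the SLLN to the i.i.d.\ bounded sequence $\{f(c_k)\}_{k \geq 1}$, whose common mean is $\E[f(c_1)] = \int_0^1 f(x)\phi(x)\,dx$. Writing $S_N(f) := \sum_{k=1}^N f(c_k)$, this yields $S_N(f)/N \to \int f\phi\,dx$ almost surely. The $\mathcal{A}_n$-average required by \ref{a.distributed} is then recovered from the telescoping identity
\begin{align*}
\frac{1}{n}\sum_{k \in \mathcal{A}_n} f(c_k) \;=\; \frac{S_{2n-1}(f) - S_{n-1}(f)}{n} \;=\; \frac{2n-1}{n}\cdot\frac{S_{2n-1}(f)}{2n-1} \;-\; \frac{n-1}{n}\cdot\frac{S_{n-1}(f)}{n-1},
\end{align*}
whose right-hand side converges to $2\int f\phi\,dx - \int f\phi\,dx = \int f\phi\,dx$ on a full-measure event $\Omega_f$.

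The delicate point --- and the only real step beyond invoking the SLLN --- is that assumption~\ref{a.distributed} demands a \emph{single} full-measure event on which the convergence holds for \emph{every} $f \in C([0,1])$ simultaneously, whereas the construction above produces a possibly different $\Omega_f$ for each $f$. I would handle this via separability: pick a countable uniformly dense subset $\{f_j\}_{j \in \N} \subset C([0,1])$ (for example, polynomials with rational coefficients, by the Weierstrass approximation theorem), and set $\Omega_0 := \bigcap_j \Omega_{f_j}$, which still has probability one. Given an arbitrary $f \in C([0,1])$ and $\eps > 0$, choose $f_j$ with $\|f - f_j\|_\infty < \eps$; using $\int \phi\,dx = 1$, one obtains
\begin{align*}
\left|\frac{1}{n}\sum_{k \in \mathcal{A}_n} f(c_k) - \int f\phi\,dx\right| \;\leq\; \left|\frac{1}{n}\sum_{k \in \mathcal{A}_n} f_j(c_k) - \int f_j\phi\,dx\right| + 2\eps.
\end{align*}
Taking $n \to \infty$ on $\Omega_0$ and then $\eps \to 0$ concludes the argument.

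The main obstacle is essentially bookkeeping: no substantive analytic difficulty arises because boundedness of the density (and hence of $f(c_k)$) makes the SLLN apply without moment concerns, and because $C([0,1])$ is separable in the sup-norm the reduction to a countable family is standard. I do not anticipate any technical complication beyond this.
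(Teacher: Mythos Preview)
Your proposal is correct and follows exactly the route the paper indicates: the paper's entire proof is the single sentence ``The distribution immediately follows from the law of large numbers,'' and you have simply written out the details (the telescoping to pass from $S_N/N$ to the $\mathcal{A}_n$-averages, and the separability argument to obtain a single full-measure event valid for all $f$). One minor wording slip: boundedness of $f(c_k)$ comes from continuity of $f$ on the compact interval, not from boundedness of the density~$\phi$.
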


Now, take $q(n)=[\log_2(\log n)]$. The uniform gap control can be obtained by a straightforward estimate of the probability of two random centers being close to each other:
\begin{lemma}\label{l.A3sat}
Under the assumptions of Theorem~\ref{t.random.assumptions}, for $q(n)=[\log_2(\log n)]$, assumption~\ref{a.uniform} is almost surely satisfied.
\end{lemma}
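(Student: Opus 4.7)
The plan is a two-point union bound followed by Borel--Cantelli, using the uniform density bound to control pairwise proximity and the exponential decay of $l_k$ to dominate the polynomial growth of $\#\mathcal{A}_{n,q(n)}$.

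First, let $C$ denote a uniform upper bound for the density $\phi$ of the $c_k$'s. For any $i\neq j$ and any $r>0$, integrating out one coordinate gives
\begin{align*}
\prob(|c_i-c_j|\leq r) \;=\; \iint \indicator_{[|x-y|\leq r]}\phi(x)\phi(y)\,dx\,dy \;\leq\; 2Cr,
\end{align*}
since the inner integral is bounded by $2Cr$ uniformly in $x$.

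Next, fix $\eps>0$ and $n$. Every pair $i\neq j$ in $\mathcal{A}_{n,q(n)}$ satisfies $\min(i,j)\ge n$, so $l_i+l_j\leq 2e^{-\lambda n}$. Hence the ``gap control'' failure event $\{|c_i-c_j|\leq (l_i+l_j)/(2\eps)\}$ has probability at most $C(l_i+l_j)/\eps \leq 2Ce^{-\lambda n}/\eps$. The choice $q(n)=[\log_2\log n]$ gives $2^{q(n)}\leq\log n$, so $\#\mathcal{A}_{n,q(n)}\leq 2n\log n$. A union bound over the at most $(2n\log n)^2$ ordered pairs therefore yields
\begin{align*}
\prob\Bigl(\exists\, i\neq j\in\mathcal{A}_{n,q(n)}:\ \tfrac{l_i+l_j}{2|c_i-c_j|}\geq \eps\Bigr) \;\leq\; \frac{8C\,n^2\log^2 n}{\eps}\, e^{-\lambda n}.
\end{align*}
This bound is summable in $n$, so Borel--Cantelli gives, for each fixed $\eps>0$, a full-measure event on which gap control at threshold $\eps$ holds for all sufficiently large $n$. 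Intersecting the corresponding null events over $\eps\in\{1/k\}_{k\in\N}$ produces a single full-measure event on which \ref{a.uniform} is satisfied for every $\eps>0$, which is what is claimed.

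I do not expect a genuine obstacle here: the argument is entirely a routine polynomial-versus-exponential comparison. The only delicate point is the calibration of $q(n)$, which must grow slowly enough that $\#\mathcal{A}_{n,q(n)}$ remains subexponential in $n$; the choice $q(n)=[\log_2\log n]$ keeps this cardinality only polylogarithmic in $n$ (and hence any pair of the roughly $n\log n$ relevant indices is easily dealt with by the exponential decay of $l_n$), which also dovetails with what is presumably needed for assumption~\ref{a.summable}.
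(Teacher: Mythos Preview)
Your argument is correct and follows essentially the same route as the paper: bound the probability that two given centers are within distance $\tfrac{1}{\eps}e^{-\lambda n}$ using the density bound, take a union bound over the at most polynomially many pairs in $\mathcal{A}_{n,q(n)}$, and apply Borel--Cantelli. Your version is in fact slightly tighter (you use $\#\mathcal{A}_{n,q(n)}\le 2n\log n$ rather than the paper's cruder $<2n^2$) and more explicit about intersecting over a countable sequence $\eps=1/k$ to obtain a single full-measure event, but neither refinement changes the substance.
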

\begin{proof}
Let $K$ be the upper bound for the density of the distribution, and let $\eps>0$ be fixed. For any $i\neq j$, $i,j\in \mathcal{A}_{n,q(n)} $, if
\[
\frac{l_i + l_j}{2 | c_i-c_j|}< \eps
\]
does not hold, it implies that 
\[
|c_i-c_j| \le  \frac{l_i + l_j}{2\eps} < \frac{1}{\eps} e^{-\lambda n},
\]
and the probability of such an event (for any given $i$ and $j$) does not exceed~$\frac{2K}{\eps} e^{-\lambda n}$. As there are less than $2^{q(n)+1}n<2n^2$ possible indices $i$ and $j$, the total probability that the condition is violated for a given $n$ does not exceed $4n^4\cdot \frac{2K}{\eps} e^{-\lambda n}$. The series 
\[
\sum_n 4n^4\cdot \frac{2K}{\eps} e^{-\lambda n}
\]
converges, and the application of the Borel-Cantelli Lemma concludes the proof.
\end{proof}

Finally, the log-averages of spaces also can be controlled quite directly: 
\begin{lemma}\label{l.A2sat}
Under the assumptions of Theorem~\ref{t.random.assumptions}, for $q(n)=[\log_2(\log n)]$, assumption~\ref{a.summable} is almost surely satisfied.
\end{lemma}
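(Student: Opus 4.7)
The plan is to fix $\eps > 0$ and prove that, for a suitably chosen $\delta > 0$, the averages
\[
Z_{n',n''}^{(\delta)} := \frac{1}{n' n''}\sum_{\substack{i \in \mathcal{A}_{n'},\, j \in \mathcal{A}_{n''} \\ i \ne j,\, |c_i - c_j| < \delta}} (-\log|c_i - c_j|)
\]
are all $< \eps$ almost surely for $n$ large enough and every $n', n'' \in \{n, 2n, \dots, 2^{q(n)} n\}$; enumerating $\eps = 1/m$ over positive integers $m$ then yields \ref{a.summable}. Denoting by $K$ an upper bound on the density, a direct calculation gives
\[
\mu(\delta) := \E\bigl[(-\log|c_i - c_j|)\indicator_{|c_i - c_j| < \delta}\bigr] \le 2K\int_0^\delta (-\log s)\,ds = 2K\delta(1-\log\delta),
\]
which tends to $0$ as $\delta \to 0$, so I would choose $\delta$ so that $\mu(\delta) < \eps/3$. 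By i.i.d.\ symmetry this also controls the expectation of $Z_{n',n''}^{(\delta)}$.

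The main obstacle is that $-\log|c_i - c_j|$ is unbounded, ruling out any direct application of bounded-variable concentration inequalities. I would resolve this by a truncation at the level $M_n := 4\log n$: set
\[
\widetilde Y_{ij} := \min\!\bigl((-\log|c_i - c_j|)\indicator_{|c_i-c_j|<\delta},\, M_n\bigr) \in [0, M_n].
\]
Since $\#(\mathcal{A}_{n,q(n)}) \le 2n\log n$, the probability that some pair in $\mathcal{A}_{n,q(n)}$ realizes $|c_i - c_j| < e^{-M_n} = n^{-4}$ is at most $(2n\log n)^2 \cdot 2K n^{-4}$, which is summable in $n$. Borel-Cantelli then ensures that almost surely, for all large $n$, the truncation is transparent on $\mathcal{A}_{n,q(n)}$, i.e.\ the sums built from $\widetilde Y_{ij}$ agree with the original ones.

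For concentration of $\widetilde V_{n',n''} := \sum \widetilde Y_{ij}$ I would apply McDiarmid's inequality to $\widetilde V_{n',n''}$ viewed as a function of the independent inputs $(c_k)_{k \in \mathcal{A}_{n'} \cup \mathcal{A}_{n''}}$. Each $c_k$ appears in at most $\max(n',n'')$ summands, each lying in $[0, M_n]$, so the bounded-difference coefficients satisfy $\sum d_k^2 \le M_n^2\, n'n''(n'+n'')$ (and an analogous bound when $n' = n''$). McDiarmid then yields
\[
\prob\!\left(\widetilde V_{n',n''} - \E \widetilde V_{n',n''} > \tfrac{\eps}{3}\,n'n''\right) \le 2\exp\!\left(-\frac{c\,\eps^2\min(n',n'')}{M_n^2}\right) \le 2\exp\!\left(-\frac{c\,\eps^2 n}{(\log n)^2}\right)
\]
for some absolute constant $c>0$, using $\min(n',n'') \ge n$ and $M_n = 4\log n$. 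A union bound over the $(q(n)+1)^2 = O((\log \log n)^2)$ admissible pairs $(n',n'')$ leaves a super-polynomially small failure probability for each $n$, so a final Borel-Cantelli step in $n$ closes the argument.

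The delicate point is the choice $M_n = \Theta(\log n)$: it has to be large enough for the truncation to be almost surely transparent (via the polynomial union bound against the density), yet small enough that McDiarmid's exponent $cn/M_n^2$ still dominates and the resulting bound remains summable in $n$ even after the $(\log \log n)^2$ union bound over pairs. The choice $q(n) = [\log_2 \log n]$ made in Lemma \ref{l.A3sat} is exactly what keeps this union bound a negligible factor.
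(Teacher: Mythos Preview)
Your proof is correct but follows a genuinely different route from the paper's.

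The paper establishes concentration via a fourth-moment method: it defines $G(x,y)=(-\log|x-y|)\indicator_{(0,\delta)}(|x-y|)$, centers it through the Hoeffding-type decomposition
\[
H(x,y)=G(x,y)-c-\E[G(x,y)\mid x]-\E[G(x,y)\mid y]+c,\qquad c=\E G,
\]
so that $\E[H\mid x]=\E[H\mid y]=0$. This orthogonality kills all terms in $\E\big(\sum_{i\neq j}H(c_i,c_j)\big)^4$ that contain an ``isolated'' index, leaving only $O\big((n'n'')^2+n'n''(n'+n'')^2\big)$ surviving terms, which combined with $\E[H^4]<\infty$ (finite because $\int_0^\delta(\log s)^4\,ds<\infty$ under a bounded density) yields $\prob(|S_{n',n''}|>\eps n'n'')=O(n^{-4})$ via Chebyshev. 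A union bound over the $O((\log n)^2)$ pairs and Borel--Cantelli finish the proof, with no truncation needed.

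Your approach replaces the moment combinatorics by truncation at $M_n=4\log n$ plus McDiarmid's bounded-differences inequality. This trades the algebraic insight of the Hoeffding decomposition for an extra Borel--Cantelli step (the transparency of the truncation) and a ready-made concentration tool; it is arguably more elementary and gives a sharper (sub-Gaussian rather than polynomial) tail, at the cost of importing McDiarmid and carrying the truncation level through the estimates. Both arguments exploit $q(n)=[\log_2\log n]$ only to make the union bound over $(n',n'')$ negligible, and both conclude with the same choice of small $\delta$ making $\mu(\delta)<\eps/3$.
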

\begin{proof}

Given $\eps>0$ be given and set 
$$G(X,Y)=(-\log|X-Y|)\indicator_{(0,\delta)}(|X-Y|),$$
where $\delta>0$ and $G(X,X)=0$. We have that 
\begin{align*}
\frac{1}{\#(\mathcal{A}_{n'}\times \mathcal{A}_{n''})}\sum_{0<|c_i-c_j|<\delta}(-\log|C_i-C_j|)=\frac{1}{\#(\mathcal{A}_{n'}\times \mathcal{A}_{n''})}\sum_{i\in\mathcal{A}_{n'}, j\in\mathcal{A}_{n''} }G(C_i,C_j).
\end{align*} 

Suppose the law of large numbers holds for $G(X,Y)$: as $n\rightarrow\infty$ we have
\begin{align*}
\frac{1}{\#(\mathcal{A}_{n'}\times \mathcal{A}_{n''})}\sum_{i\in\mathcal{A}_{n'}, j\in\mathcal{A}_{n''} }G(C_i,C_j)\rightarrow \E G(C_1,C_2),
\end{align*}
where  for $n',n''\in \{n,2n,\dots, 2^{q(n)}n \}$. 
Then we may find a $\delta$ such that \ref{a.summable} holds.

The law of large numbers holds by considering the difference:
\[
H(x,y):=G(x,y)-c-\E[ G(x,y)-c|y]-\E[ G(x,y)-c|x],
\] 
where $c=\E G(x,y)$
and their average
\begin{align}\label{e.average}
\frac{1}{n'n''} S_{n',n''}=\frac{1}{n'n''}\sum_{i\neq j}H(x,y).
\end{align}
Now, consider the fourth power of $S_{n',n''}$ and take its expectation:
\begin{align}\label{e.sum.expectation}
\E (S_{n',n''})^4=\sum \E [H(c_{i_1},c_{i_2})H(c_{j_1},c_{j_2})H(c_{k_1},c_{k_2})H(c_{l_1},c_{l_2})].
\end{align}
The function $H(x,y)$ has the property that $\E[ H(x,y)|y]=\E[ H(x,y)|x]=0$.
Hence, if a term has an independent random variable, say $c_{i_1}$, then  
\begin{align*}\label{e.sum.expectation}
\E [H(c_{i_1},c_{i_2})H(c_{j_1},c_{j_2})H(c_{k_1},c_{k_2})H(c_{l_1},c_{l_2})]=0.
\end{align*}
On the other hand, when every random variable is depended on another random variable, we can count the non-vanishing terms. There are $(n'n'')$ terms of the form $\E [H(c_{i_1},c_{i_2})^4].$ There are $(n'n'')^2$ terms of the form
$$\E [H(c_{i_1},c_{i_2})^2H(c_{j_1},c_{j_2})^2].$$
There are at most $(n'n'')(n'+n'')4$ terms of the form
$$\E [H(c_{i_1},c_{i_2})^2H(c_{j_1},x)H(c_{k_1},y)],$$
where $x,y\in\{c_{i_1}\,c_{i_2}\}.$
 Lastly, there are at most $n'n''(n'+n'')^2$ terms of the form
$$\E[H(c_{i_1},c_{i_2})H(c_{i_1},c_{j_2})H(c_{k_1},c_{i_2})H(c_{k_1},c_{j_2})].$$
Since $\E[H(x,y)^4]<\infty$, then
\[
\E S_{n',n''}^4 \le C'\max\{(n'n'')^2, n'^3n'',n'n''^3\},
\]
where $C'>0$ is some constant. An application of the Chebyshev inequality implies that
\begin{align*}
\prob\left( |S_{n',n''}|>\eps (n'n'') \right)&\leq \E(S_n)^4/(\eps (n'n''))^4\\
&\leq \frac{C'}{\eps^4}\max\left\lbrace\frac{1}{(n'n'')^2},\frac{1}{n'n''^3},\frac{1}{n'^3n''}\right\rbrace.
\end{align*}
Since $n\leq n',n''$, then 
\begin{align*}
\prob\left( |S_{n',n''}|>\eps (n'n'') \right)&\leq \frac{C'}{\eps^4}\frac{1}{n^4}.
\end{align*}
We have that 
\begin{align*}
\sum_{n=1}^\infty\sum_{n'=n}^{2^{q(n)}n}\sum_{n''=n}^{2^{q(n)}n}\prob\left( |S_{n',n''}|>\eps (n'n'') \right)&\leq \frac{C'}{\eps^4}\sum_{n=1}^\infty \frac{2^{2q(n)}}{n^2}\\
&\leq \frac{C'}{\eps^4}\sum_{n=1}^\infty \frac{(\log n)^2}{n^2},
\end{align*}
which is finite. By Borel-Cantelli lemma, $ |S_{n',n''}|>\eps (n'n'')$ does not occur infinitely often with probability 1. Let $\eps_k$ be a sequence of positive numbers that decreases to $0$ as $k\rightarrow\infty$. For each $\eps_k$, $ |S_{n',n''}|>\eps_k (n'n'')$ does not occur infinitely often with probability 1. Since the countable intersection of sets of full measure has full measure, then for all $\eps>0$, there exists $n_0\in\N$ such that for any $n\geq n_0$, for every $n',n''\in \{n,2n,\dots, 2^{q(n)}n \}$, we have $ |S_{n',n''}|<\eps (n'n'')$ with probability 1. That is, the average \eqref{e.average} goes to $0$ as $n\rightarrow\infty$.

\end{proof}

Together, lemmas \ref{l.A1sat}, \ref{l.A3sat}, \ref{l.A2sat}  imply Theorem~\ref{t.random.assumptions}.




\section{The re-distribution technique}\label{s.re-distribution-technique}

\subsection{Introducing the technique}\label{s.fullcapacity}
Our main tool for establishing full capacity for a set $S$ (what is needed for the proof of Theorem \ref{t.e^-k.fullcap}) will be the \emph{re-distrubtion technique} that was introduced in \cite{KQ}. We will recall the method in this section. The main property that allows it to work will be the following one:

\begin{definition}\label{d.TEA}
We say that $S$ (in the generic setting \eqref{d.S}) is \textit{re-distributable} if the following holds: for every probability measure $\nu$ with piecewise continuous density that is supported on a finite collection of intervals in $[0,1]$ and for every $\eps>0$ and every $m\in\N$, there exists another probability measure $\nu'$ with piecewise continuous density such that 
\begin{enumerate}
\item $I(\nu')<I(\nu)+\eps$,
\item $\nu'$ is supported on $\supp \nu \cap V_n$ for some $n\geq m$,
\end{enumerate}
where $V_n$ is a finite union of $I_k$'s with $k\geq n$.
\end{definition}

The following proposition then allows us to establish full capacity:

\begin{proposition}\label{t.fullcapacity} If  $S$ is re-distributable, then $S$ has full capacity on the unit interval:
$$\Ca (S)=\Ca ([0,1]).$$
\end{proposition}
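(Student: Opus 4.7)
The plan is to apply Definition~\ref{d.TEA} iteratively, starting from a probability measure on $[0,1]$ whose energy is already close to the minimum $-\log\Ca([0,1])$, and to extract a weak-$*$ limit whose support lies in $S$ with nearly minimal energy. Since $S\subset[0,1]$ gives $\Ca(S)\le\Ca([0,1])$ trivially, only the reverse inequality needs to be established.

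Fix $\eps>0$. First, choose an initial probability measure $\nu_0$ with piecewise continuous density supported on a finite union of closed subintervals of $[0,1]$ such that $I(\nu_0)<-\log\Ca([0,1])+\eps/2$; this can be done by truncating and smoothing the equilibrium density $\tfrac{1}{\pi\sqrt{x(1-x)}}$ of $[0,1]$ slightly away from the endpoints. Next, pick positive numbers $\eps_j$ with $\sum_j\eps_j<\eps/2$ and an auxiliary sequence $m_j=j$. Inductively, given $\nu_j$ (which, by Definition~\ref{d.TEA}, again has piecewise continuous density on a finite union of intervals), invoke re-distributability with tolerance $\eps_{j+1}$ and threshold $m_{j+1}$ to produce $\nu_{j+1}$ with
\[
I(\nu_{j+1})<I(\nu_j)+\eps_{j+1},\qquad \supp\nu_{j+1}\subset\supp\nu_j\cap V_{n_{j+1}},
\]
where $V_{n_{j+1}}$ is a finite union of intervals $I_k$ with $k\ge n_{j+1}\ge j+1$. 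In particular $I(\nu_j)<-\log\Ca([0,1])+\eps$ uniformly in $j$, and the supports $K_j:=\supp\nu_j$ form a decreasing chain of compact subsets of $[0,1]$.

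By weak-$*$ compactness of probability measures on the compact interval $[0,1]$, extract a subsequence $\nu_{j_\ell}$ converging weakly to a probability measure $\mu$. The portmanteau theorem together with the nesting of the $K_j$ yields $\supp\mu\subset\bigcap_\ell K_{j_\ell}\subset\bigcap_\ell \overline{V_{n_{j_\ell}}}$. Since $n_{j_\ell}\to\infty$, every point of this intersection either lies in $S=\bigcap_m\bigcup_{k\ge m}I_k$ or is an endpoint of some $I_k$; the set of endpoints is countable, and $\mu$ has no atoms because its energy is finite. Therefore $\mu$ is effectively concentrated on $S$. By the standard lower semicontinuity of the logarithmic energy under weak convergence on a compact set (obtained by truncating the kernel $-\log|z-w|$ from above and applying monotone convergence),
\[
I(\mu)\le\liminf_{\ell\to\infty}I(\nu_{j_\ell})<-\log\Ca([0,1])+\eps.
\]
Letting $\eps\downarrow 0$ gives $\Ca(S)\ge\Ca([0,1])$, and equality follows.

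The main obstacle is the topological mismatch between the compact supports of the approximating measures and the $G_\delta$ (non-closed) nature of $S$: the closures $\overline{V_n}$ may contain finitely many endpoints of the $I_k$'s that do not lie in $S$. The atomless argument above handles this measure-theoretically, but the cleanest fix, if one prefers a genuinely compact support inside $S$, is to replace each $I_k$ from the outset by a concentric open subinterval $I_k'$ of length $(1-2^{-k})l_k$, so that $\overline{I_k'}\subset I_k$; all support inclusions then become topologically clean inside $S$, while the asymptotic length scale of the intervals (and hence the whole construction) is unchanged.
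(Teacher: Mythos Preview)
Your proposal is correct and follows essentially the same approach as the paper: iterate Definition~\ref{d.TEA} starting from an approximation to the equilibrium measure, obtain a nested sequence of supports, pass to a weak-$*$ limit, invoke lower semicontinuity of energy (the paper cites \cite[Lemma~3.3.3]{Ra}), and handle the countable set of stray endpoints via the fact that a finite-energy measure charges no countable set (the paper cites \cite[Theorem~3.2.3]{Ra} for this, while you argue directly via atomlessness). The only step you leave slightly implicit is the passage from ``$\mu$ is effectively concentrated on $S$'' to an actual competitor with \emph{compact support contained in} $S$; the paper closes this by inner regularity (restrict to a large compact $K\subset C_\infty$ and renormalize), and your alternative shrinking-intervals fix in the last paragraph would also do it.
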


\begin{proposition}\label{p.S.alpha=1.total.energy} Assume \ref{a.distributed} - \ref{a.uniform} for interval lengths $l_k=e^{-\lambda k}$ for some $\lambda>0$. Then the set $S$ is re-distributable.
\end{proposition}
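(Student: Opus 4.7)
My plan is to construct $\nu'$ by spreading the mass of $\nu$ across the intervals $\{I_k: k\in\mathcal{A}_{n,q(n)}\}$ for a sufficiently large $n\ge m$. The role of the $q(n)+1$ dyadic scales $\mathcal{A}_{2^jn}$ is essential: it is what will force the diagonal self-energy to vanish. Specifically, I would let $\mu_k := l_k^{-1}\leb|_{I_k}$ be the uniform probability on $I_k$, assign weights
\[
w_k := \frac{1}{q(n)+1}\cdot\frac{g(c_k)/\phi(c_k)}{\#\mathcal{A}_{2^{j(k)}n}},
\]
where $g$ is the density of $\nu$ and $j(k)$ is the unique $j$ with $k\in\mathcal{A}_{2^jn}$, and set $w_k=0$ for the $o(\#\mathcal{A}_{2^jn})$ indices whose interval $I_k$ straddles $\del\supp\nu$ (permissible because $\supp\nu$ has finitely many endpoints and $\phi$ has no atoms). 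Then $\nu' := Z^{-1}\sum_k w_k \mu_k$ with $Z:=\sum_k w_k$ is a piecewise-constant probability measure supported inside $\supp\nu\cap V_n$ where $V_n:=\bigcup_{k\in\mathcal{A}_{n,q(n)}}I_k$. Applying assumption~\ref{a.distributed} scale by scale with $f=g/\phi$ gives both $Z\to 1$ and weak convergence of the weighted empirical measure $\tilde\nu := \sum_k w_k\delta_{c_k}$ to $\nu$.

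Next, I would decompose $I(\nu') = \sum_k w_k^2 I(\mu_k) + \sum_{k\neq k'} w_kw_{k'}I(\mu_k,\mu_{k'})$. For the diagonal, a direct computation gives $I(\mu_k)=\lambda k+O(1)$; since $w_k$ is of order $1/((q(n)+1)\cdot 2^{j(k)}n)$ and $k$ is of order $2^{j(k)}n$, a scale-by-scale count yields $\sum_k w_k^2 I(\mu_k)=O(1/(q(n)+1))\to 0$. For the off-diagonal, assumption~\ref{a.uniform} guarantees $(l_k+l_{k'})/(2|c_k-c_{k'}|)<\eps$ uniformly over distinct pairs in $\mathcal{A}_{n,q(n)}$, so $-\log|x-y|=-\log|c_k-c_{k'}|+O(\eps)$ uniformly for $(x,y)\in I_k\times I_{k'}$, and the off-diagonal reduces to $\sum_{k\neq k'}w_kw_{k'}(-\log|c_k-c_{k'}|)$ up to a total error of $O(\eps)$. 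I would split this sum at a threshold $\delta>0$ supplied by~\ref{a.summable}: the near part ($|c_k-c_{k'}|<\delta$) is controlled by applying~\ref{a.summable} to each of the $(q(n)+1)^2$ pairs of scales, contributing $O(\eps)$ in total; the far part is handled by testing the weak convergence $\tilde\nu\otimes\tilde\nu\to\nu\otimes\nu$ against the bounded continuous function $-\log|x-y|\cdot\indicator_{|x-y|\ge\delta}$, giving
\[
\sum_{\substack{k\neq k'\\|c_k-c_{k'}|\ge\delta}} w_kw_{k'}(-\log|c_k-c_{k'}|) \too \iint_{|x-y|\ge\delta}(-\log|x-y|)\,d\nu\,d\nu \;\le\; I(\nu).
\]
Assembling, $I(\nu')\le I(\nu)+O(\eps)+O(1/(q(n)+1))$, and taking $\delta$ small and then $n$ large yields $I(\nu')<I(\nu)+\eps$.

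The hard part will be the regularity of the ratio $g/\phi$: $g$ is only piecewise continuous and $\phi$ is $L^1$ with $\phi>0$ a.e. but possibly without a positive lower bound, so $g/\phi$ may be neither bounded nor continuous, and assumption~\ref{a.distributed} cannot be applied to it directly. I plan to address this via a preliminary two-step approximation. First, truncate $\nu$ to the set $\{\phi\ge\eta\}$ and renormalize; the discarded mass is $o(1)$ as $\eta\to 0$ because $\nu$ is absolutely continuous and $\phi>0$ a.e., and standard potential-theoretic continuity shows that the energy of the truncated measure converges to $I(\nu)$. Second, replace the piecewise continuous $g$ by a continuous approximant on $\{\phi\ge\eta\}$ (valid because jump discontinuities form a finite set). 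After these steps the ratio $g/\phi$ is bounded and continuous, all scale-by-scale applications of~\ref{a.distributed} become legitimate, and the approximation errors can be absorbed into $\eps$.
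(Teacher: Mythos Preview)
Your approach coincides with the paper's: spread the mass over $q(n)+1$ dyadic scales so that the diagonal self-energy is $O(1/(q(n)+1))$; use \ref{a.uniform} to replace $-\log|x-y|$ by $-\log|c_k-c_{k'}|$ on the off-diagonal; split the resulting center-pair sum at threshold $\delta$, with the near part controlled by \ref{a.summable} and the far part by \ref{a.distributed}. The paper packages this as first constructing single-level probability measures $\hat\mu_n$ with density $f(x)/l_k$ on each $I_k$, showing $I(\hat\mu_n)=O(1)$, then averaging over scales, and finally substituting $f\approx h/\phi$ (Proposition~\ref{p.absolutely.continuous}); you build the weights $g(c_k)/\phi(c_k)$ in from the start and normalize once---these are equivalent reorganizations of the same computation.

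One technical point in your last paragraph: the truncation plan runs into trouble because $\phi$ is only in $L^1$, so $\{\phi\ge\eta\}$ can be highly irregular, the pointwise values $\phi(c_k)$ are not well defined, and \ref{a.distributed} is stated only for continuous test functions---so you cannot legitimately apply it with $f=g/\phi$ even after bounding the ratio. The paper sidesteps this in Proposition~\ref{p.absolutely.continuous} by approximating $h/\phi$ directly by a \emph{globally continuous} function $f_0$ (in a sense making $I(f_0\phi\,dx)$ and $\int f_0\phi\,dx$ close to $I(h\,dx)$ and $\int h\,dx$), and then running the entire construction with $f_0$. If you replace your weights by $f_0(c_k)$ in this way, every appeal to \ref{a.distributed} becomes legitimate and your argument goes through unchanged. (Also minor: $-\log|x-y|\cdot\indicator_{|x-y|\ge\delta}$ is not continuous; the paper handles the analogous step inside Lemma~\ref{l.SLLN.non.random} via truncation of the logarithm rather than of the domain.)
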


Section \ref{s.main.proof} is devoted to the proof of Proposition \ref{p.S.alpha=1.total.energy}. 


\subsection{Proof of Proposition \ref{t.fullcapacity}}\label{s.re-dist.proof}
In this section, we will prove that $S$ has full capacity when $S$ is re-distributable.

As we have mentioned in Section \ref{s.plan}, the proof of Proposition \ref{t.e^-k.fullcap} is obtained by inductively constructing a sequence of measures with smaller and smaller support. Let us make these arguments formal:

\begin{proof}[Proof of Proposition \ref{t.fullcapacity}]

The density function for the equilibrium measure for the unit interval is
\begin{align*}
f_{[0,1]}(x)=\frac{1}{\pi \sqrt{x(1-x)}},
\end{align*}
for $x\in(0,1)$ and $0$ otherwise (see e.g.~\cite[Eq.~(A.53)]{Si}). 
Given $\eps>0$, there exists a continuous density function $f$ such that
$$I(f(x)dx)<I(f_{[0,1]}(x)dx)+\eps.$$
Let $d\nu_0(x):=f(x)\,dx$ with support $[0,1]$. Applying Definition \ref{d.TEA} to $\nu_0$, there exists $\nu_1$ with support $V_{n_1}$ and 
$$I(\nu_1)<I(\nu_0)+\eps/2^2.$$
Apply Definition \ref{d.TEA} to $\nu_1$, there exists $\nu_2$ with support $V_{n_1}\cap V_{n_2}$ and 
$$I(\nu_2)<I(\nu_1)+\eps/2^3.$$ and $n_1<n_2$.
By induction and applying Definition \ref{d.TEA}, for each $m\in\N$ there exists a Borel probability measure $\nu_m$ that is supported on $$C_m:=V_{n_1}\cap\cdots\cap V_{n_m}.$$ 
We consider the telescoping sum:
\begin{align*}
I(\nu_m)-I(\nu_0)=\sum_{i=1}^m \big(I(\nu_i)-I(\nu_{i-1})\big)<\sum_{i=1}^m\frac{\eps}{2^{i+1}}.
\end{align*} 
It follows that 
\begin{align*}
I(\nu_m)<I(\nu_0) + \eps.
\end{align*}
As in \cite{KQ}, any weak* limit will work. Assume that $\nu_\infty$ is a weak* limit of $\{\nu_m\}.$ Passing to a weak* limit can only decrease the energy (see \cite[Lemma 3.3.3]{Ra}): 
\begin{align*}
I(\nu_\infty)\leq\liminf_{m\rightarrow\infty} I(\nu_m)<I(\nu_0) + \eps< I(f_{[0,1]}(x)\,dx)+2\eps.
\end{align*}
Since $\eps>0$ is arbitrary, we have that 
\begin{align*}
I(\nu_\infty)\leq I(f_{[0,1]}(x)\,dx).
\end{align*}
If $\nu_\infty$ has compact support contained in $S$, then we are done. The weak* limit only allows us to conclude that $\nu_\infty$ has compact support contained in
\begin{align*}
 \bar{C}_\infty:=\bigcap_m \cl(C_m).
\end{align*}
However, $\bar{C}_\infty$ differs from
\begin{align*}
 C_\infty:=\bigcap_m C_m \subset S,
\end{align*}
by at most a countable set $P$ (the collection of boundary points of each $C_m$). Since $I(\nu_\infty)<\infty$, then $\nu_\infty (P)=0$ (see \cite[Theorem 3.2.3]{Ra}). By regularity of Borel measures, we may find a Borel probability measure with compact support contained in $C_\infty$ that differs from $I(\nu_\infty)$ as small as we want. Hence, 
\begin{align*}
I(f_{[0,1]}(x)\,dx)=\inf\{I(\nu):\nu\in\Proba(C_\infty)\}.
\end{align*}
Since $C_\infty\subset S$, then $S$ has full capacity:
\begin{align*}
\Ca(S)=\Ca([0,1]).
\end{align*}

\end{proof}


\section{Proving Proposition \ref{p.S.alpha=1.total.energy}: \ref{a.distributed}-\ref{a.uniform} imply re-distribution }\label{s.main.proof}



\subsection{Properties of assumptions \ref{a.distributed}-\ref{a.uniform}}\label{s.assumptions}
In this section, we will discuss some of the properties of assumptions \ref{a.distributed}-\ref{a.uniform} that will be needed in the proofs.

The gap control property (assumption \ref{a.uniform}) is aimed at controlling the gaps between two distinct intervals in a collection of intervals in a uniform way. Let $I,I'\subset (0,1)$ be two disjoint intervals with centers $c,c'$, then the gap between $I$ and $I'$ is
\begin{align}\label{e.gap.distance}
\dist(I,I')=|c-c'|-\frac{1}{2}(|I|+|I'|)>0.
\end{align}
We will control the gaps by controlling the ratio of the average of the lengths and the distance between their centers (see \ref{a.uniform}).

\begin{remark}\label{r.I_k.are.disjoint}
Notice that by letting $\eps<1$ in \ref{a.uniform}, we get \eqref{e.gap.distance}. Hence, gap control implies that the intervals in
\begin{align*}
\{I_k: k\in 2n,\ldots, 2^{q(n)}n-1\},
\end{align*}
are pairwise disjoint.
Each measure that we construct in Section \ref{s.construction} will be supported on pair-wise disjoint collection:
\begin{align*}
\{I_k: k\in\mathcal{A}_n\}=\{I_n,I_{n+1},\ldots, I_{2n-1}\}.
\end{align*}
In Section \ref{s.critical}, we will construct a measure that is an average of measures from Section \ref{s.construction}. Hence, the average measure will be supported on: 
\begin{gather*}
\{  I_n,I_{n+1},\ldots, I_{2n-1}\}\\
\{I_{2n},I_{2n+1},\ldots, I_{2^2n-1}\}\\
\vdots\\
\{I_{2^{q(n)-1}n},I_{2^{q(n)-1}n+1},\ldots, I_{2^{q(n)}n-1}\}.
\end{gather*}
Assumption \ref{a.uniform} allows the collection of intervals above to be disjoint.
\end{remark}

The distribution property (assumption \ref{a.distributed}) requires the centers to be \textit{distributed} with respect to some function $\phi(x)\in L^1([0,1],\,dx),$ where $\phi(x)>0$ a.e.:
\begin{align}\label{d.distributed.1d}
\frac{1}{\#(\mathcal{A}_{n})}\sum_{k\in\mathcal{A}_n}f(c_k)\to \int_0^1 f(x)\phi(x)\,dx \quad \text{as}\quad n\rightarrow \infty,
\end{align}
for every continuous function $f\in C([0,1])$. This definition is a generalization of equidistributed sequences.
\begin{remark}
Note that \eqref{d.distributed.1d} will hold for piecewise continuous functions $f$ since we may approximate such functions from above and below by continuous functions in $L_1$. Equation \eqref{d.distributed.1d} extends to 2-dimensions: Let $f$ be any piecewise continuous function. Then
\begin{align}\label{d.distributed.2d}
\frac{1}{\#(\mathcal{A}_{n}\times \mathcal{A}_{n'})}\sum_{i\in\mathcal{A}_{n}, j\in\mathcal{A}_{n'} }f(c_i)f(c_j)\to \int_0^1\int_0^1 f(x)f(y)\phi(x)\phi(y)\,dxdy,
\end{align}
as $m\rightarrow\infty$ and $n,n'\geq m$.

\end{remark}

To show that $S$ is re-distributable (see Definition \ref{d.TEA}), we will show that the following statement holds:
\begin{enumerate}[label=\textbf{P.\arabic*}]
\item \label{property} For each positive continuous function $f$ on the interval $[0,1]$, there exists a sequence of probability measures $\{\mu^n\}$ so that each $\mu^n$ has a piecewise continuous density with
support contained in a finite union of disjoint $I_k$'s with $k\geq n$ and with asymptotic behavior:
\begin{align*}
I(\mu^n)=& \frac{I(f(x)\, dx)}{(\int_0^1 f(x)\,dx)^2} +o(1).
\end{align*}

\end{enumerate}

With the distribution assumption \ref{a.distributed} and $\log$-average spacing assumption \ref{a.summable}, we can see that the centers have the asymptotic behavior that is needed in \ref{property}:

\begin{lemma}\label{l.SLLN.non.random}
Under assumptions \ref{a.distributed} and \ref{a.summable}, for every $f\in C([0,1])$, as $n\rightarrow \infty$ and $n\leq n',n''$, we have that
\begin{align}\label{e.SLLN.non.random}
\frac{1}{\#(\mathcal{A}_{n'}\times \mathcal{A}_{n''})}\sum_{i\neq j}(-\log|c_i-c_j|)f(c_i)f(c_j)\to I(f(x)\phi(x)\,dx),
\end{align}
where the sum is taken over $(i,j)\in \mathcal{A}_{n'}\times \mathcal{A}_{n''}$ and $i\neq j$.
Moreover,
\begin{align*}
I(f(x)\phi(x)\,dx) <\infty.
\end{align*}
\end{lemma}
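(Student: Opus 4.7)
The plan is to split the double sum at a distance threshold $\delta>0$: bound the ``close'' part $|c_i-c_j|<\delta$ using assumption \ref{a.summable} and compute the limit of the ``far'' part $|c_i-c_j|\geq\delta$ using a two-dimensional extension of the distribution property \eqref{d.distributed.2d}, then send $\delta\to 0^+$.

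Fix $\eps>0$. By \ref{a.summable}, pick $\delta>0$ so that the un-weighted close-pair sum is below $\eps$ for all large $n$ and $n',n''\in\{n,2n,\ldots,2^{q(n)}n\}$. Since $\|f\|_\infty<\infty$ and $-\log|c_i-c_j|\geq 0$ on $[0,1]^2$, inserting $f(c_i)f(c_j)$ only costs a factor $\|f\|_\infty^2$, so the close-pair contribution is at most $\|f\|_\infty^2\eps$ in absolute value. For the far pairs, consider
\begin{equation*}
g_\delta(x,y):=(-\log|x-y|)\indicator_{\{|x-y|\geq\delta\}}f(x)f(y),
\end{equation*}
which is bounded and continuous away from the Lebesgue-null set $\{|x-y|=\delta\}$. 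The product-form 2D statement \eqref{d.distributed.2d} factors as $\tfrac{1}{n'}\sum u(c_i)\cdot\tfrac{1}{n''}\sum v(c_j)$ with $n',n''$ varying independently, so it extends by Stone-Weierstrass to arbitrary continuous integrands on $[0,1]^2$, and then to the nearly-discontinuous $g_\delta$ by sandwiching it between continuous envelopes $g_\delta^-\leq g_\delta\leq g_\delta^+$ whose $\phi\otimes\phi$-integrals squeeze (since the jump set is a Lipschitz curve, hence $\phi\otimes\phi$-null). Omitting $i=j$ is harmless since $g_\delta(x,x)=0$, so
\begin{equation*}
\frac{1}{\#(\mathcal{A}_{n'}\times\mathcal{A}_{n''})}\sum_{i,j}g_\delta(c_i,c_j)\longrightarrow J_\delta:=\iint_{|x-y|\geq\delta}(-\log|x-y|)f(x)\phi(x)f(y)\phi(y)\,dx\,dy,
\end{equation*}
and combining with the close-pair bound yields $\limsup_n|\,\text{empirical sum}-J_\delta|\leq\|f\|_\infty^2\eps$.

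For the finiteness of $I(f\phi\,dx)$, I would run the same argument with $|f|$ in place of $f$: taking $\eps=1$ with the corresponding $\delta_0$, the crude estimate $-\log|c_i-c_j|\leq|\log\delta_0|$ on the far part, combined with the close-pair bound, gives a uniform-in-$n$ bound on the empirical sum for $|f|$, hence a uniform-in-$\delta$ bound on the monotonically increasing quantity $\delta\mapsto\iint_{|x-y|\geq\delta}(-\log|x-y|)|f|\phi\otimes|f|\phi\,dxdy$. Monotone convergence then gives $I(|f|\phi\,dx)<\infty$, so $I(f\phi\,dx)$ is absolutely convergent, $J_\delta\to I(f\phi\,dx)$ by dominated convergence as $\delta\to 0^+$, and the arbitrariness of $\eps$ delivers \eqref{e.SLLN.non.random}. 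The main obstacle will be the upgrade from the product-form statement \eqref{d.distributed.2d} to the nearly-discontinuous kernel $g_\delta$, particularly because $n'$ and $n''$ may differ; this is manageable because every product summand $u_k(x)v_k(y)$ coming out of Stone-Weierstrass factors into two independent 1D averages, each controlled separately by \ref{a.distributed}.
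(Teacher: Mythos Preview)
Your approach is essentially the same as the paper's: both split the sum at a threshold $\delta$, control the close pairs by \ref{a.summable} (paying a factor $\|f\|_\infty^2$), compute the far pairs via the two-dimensional distribution property applied to the truncated bounded kernel, and then send $\delta\to 0$. Your justification of the 2D extension (Stone--Weierstrass on products followed by a sandwich across the null curve $\{|x-y|=\delta\}$) is in fact more explicit than the paper's, which invokes \eqref{d.distributed.2d} directly on the non-product kernel; and for the finiteness of $I(f\phi\,dx)$ the paper uses Fatou's lemma on the close-pair integral whereas you use monotone convergence on $J_\delta$, but these are equivalent routes to the same conclusion.
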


\begin{remark}
The density function $\phi(x)$ in assumption \ref{a.distributed} is not to be confused with the continuous density function $f$ in Definition \ref{d.TEA} and in \ref{property}. Once the centers are distributed with respect to $\phi(x)$, the function $\phi(x)$ is fixed. The continuous density function $f$ in Definition \ref{d.TEA} and in \ref{property} is arbitrary.
\end{remark}

\begin{proof}[Proof of Lemma \ref{l.SLLN.non.random}]
Given $\eps>0$, let $\delta>0$ satisfy assumption \ref{a.summable}. For $s>0$, define $f_s(x)=-\log x$ for $x\geq s$ and $0$ otherwise. Using Fatou's lemma, we get
\begin{align*}
\iint_{|x-y|<\delta} (-\log|x-y|)\phi(x)\phi(y) \,dx\,dy \leq& \liminf_{s\rightarrow 0^+} \iint_{|x-y|<\delta} f_s(|x-y|)\phi(x)\phi(y) \,dx\,dy.
\end{align*}
Using assumption \ref{a.distributed} for $n',n''\in \{n,2n,\dots, 2^{q(n)}n \},$ we have 
\begin{align*}
\liminf_{s\rightarrow 0^+} \iint_{|x-y|<\delta} f_s(|x-y|)\phi(x)\phi(y) \,dx\,dy=&\liminf_{s\rightarrow 0^+} \lim_{n\rightarrow\infty}\sum_{0<|c_i-c_j|<\delta}\frac{f_s(|c_i-c_j|)}{\#(\mathcal{A}_{n'}\times \mathcal{A}_{n''})} \\
\leq& \lim_{n\rightarrow\infty} \sum_{0<|c_i-c_j|<\delta} \frac{(-\log|c_i-c_j|)}{\#(\mathcal{A}_{n'}\times \mathcal{A}_{n''})}\\
\leq& \eps,\end{align*}
where the last holds by assumption \ref{a.summable} for some $\delta>0$. Hence, for every continuous function $f$, we have finite energy:
\begin{align*}
I(f(x)\phi(x)\,dx)=\iint (-\log|x-y|)f(x)\phi(x)f(y)\phi(y) \,dx\,dy <\infty.
\end{align*}
Note that 
\begin{align*}
\sum_{i\neq j}\frac{(-\log|c_i-c_j|)f(c_i)f(c_j)}{\#(\mathcal{A}_{n'}\times \mathcal{A}_{n''})}-&\sum_{i\neq j}\frac{(-\log|c_i-c_j|)f_\delta(c_i)f_\delta(c_j)}{\#(\mathcal{A}_{n'}\times \mathcal{A}_{n''})}\\
&=\sum_{0<|c_i-c_j|<\delta}\frac{(-\log|c_i-c_j|)f(c_i)f(c_j)}{\#(\mathcal{A}_{n'}\times \mathcal{A}_{n''})}.
\end{align*}
Let $n\rightarrow\inf$, then by assumptions \ref{a.distributed} and \ref{a.summable}, we have

\begin{align*}
\left|\lim_{n\rightarrow\infty}\sum_{i\neq j}\frac{(-\log|c_i-c_j|)f(c_i)f(c_j)}{\#(\mathcal{A}_{n'}\times \mathcal{A}_{n''})}-\iint f_\delta(|x-y|)f(x)f(y)\phi(x)\phi(y)\right|\\
\leq\eps \cdot(\max |f|)^2.
\end{align*}
By letting  $\delta\rightarrow 0$, we have that
\begin{align*}
\left|\lim_{n\rightarrow\infty}\sum_{i\neq j}\frac{(-\log|c_i-c_j|)f(c_i)f(c_j)}{\#(\mathcal{A}_{n'}\times \mathcal{A}_{n''})}-I(f(x)\phi(x)\,dx)\right|
\leq\eps \cdot\max |f|.
\end{align*}
Since $\eps>0$ is arbitrary, we get \eqref{e.SLLN.non.random}.

\end{proof}

\subsection{Construction of a single-level re-distribution}\label{s.construction}
Our first step in constructing the probability measures in \ref{property} is to construct a \textit{single-level re-distribution} probability measure. This section is devoted to the construction of such probability measures.

We begin with a ``re-distribution" type of measure $f(x)\left.dx\right|_{[0,1]}$ onto a single interval:
\begin{align*}
\mu_k=\frac{f(x)\left.dx\right|_{I_k}}{|I_k|}.
\end{align*}
We do not call this a re-distribution as in \cite{KQ} because the measure is not necessarily a probability measure. We consider the average of $\mu_k$'s:
\begin{align*}
\mu_{\mathcal{A}_n}:=\frac{1}{\#\mathcal{A}_n}\sum_{k\in \mathcal{A}_n}\mu_k,
\end{align*}
where $\mathcal{A}_n$ are defined in \eqref{d.A_n} and satisfy the gap control property \ref{a.uniform}. Recall that gap control implies that for large enough $n$, our collection of intervals are disjoint (see Remark \ref{r.I_k.are.disjoint}). Notice that each measure $\mu_{\mathcal{A}_n}$ is not necessarily a probability measure. To correct that, let us define
$$
V_n=\bigcup_{k\in\mathcal{A}_n} I_k.
$$
Then, we consider the \textit{single-level re-distribution} probability measure:
\begin{align}\label{d.mu.hat}
\hat{\mu}_n:=\frac{\mu_{\mathcal{A}_n}}{\mu_{\mathcal{A}_n}(V_n)},
\end{align}
which is supported on $V_n$ and it's energy is
\begin{align*}
I(\hat{\mu}_n)=\frac{1}{(\mu_{\mathcal{A}_n}(V_n))^2}I(\mu_{\mathcal{A}_n}).
\end{align*}

Thus, we are interested in the asymptotic behavior of $\mu_{\mathcal{A}_n}(V_n)$ and the asymptotic behavior of:
\begin{align}\label{e.partition.energy}
I\left(\mu_{\mathcal{A}_n}\right)=\frac{1}{(\#\mathcal{A}_n)^2}\sum_{k\in\mathcal{A}_n} I(\mu_k) +\frac{1}{(\#\mathcal{A}_n)^2}\sum_{i\neq j}I(\mu_{i},\mu_{j}).
\end{align}

The first sum is referred to as the \textit{self-interaction} sum because in $I(\mu_k)=I(\mu_k,\mu_k)$ the same measure is interacting with itself. The second sum is referred to as \textit{outer-interaction} sum because we have two measures with disjoint supports interacting with each other in $I(\mu_k,\mu_j)$.

In Section \ref{s.outer}, we will discuss the asymptotic behavior of the outer-interaction and in Section \ref{s.self} we will work on controlling the asymptotic behavior of the self-interaction sum. In Section \ref{s.critical}, we will put the two together. We will finish the section with the asymptotic behavior of $\mu_{\mathcal{A}_n}(V_n):$

\begin{lemma} If \ref{a.distributed} and \ref{a.uniform} hold, then 
\begin{align*}
\mu_{\mathcal{A}_n}(V_n)=\int f(x)\phi(x)\,dx +o(1).
\end{align*}\label{l.normalization}

\end{lemma}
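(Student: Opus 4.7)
The plan is to carry out a direct calculation: unwind the definition of $\mu_{\mathcal{A}_n}(V_n)$, exploit disjointness of the intervals at level $n$ (from gap control), approximate each single-interval mass by $f(c_k)$ using continuity of $f$, and then invoke the distribution assumption to identify the limit.

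First I would observe that by Remark \ref{r.I_k.are.disjoint}, assumption \ref{a.uniform} (with $\varepsilon<1$) guarantees that for all sufficiently large $n$ the intervals $\{I_k:k\in\mathcal{A}_n\}$ are pairwise disjoint. Consequently $\mu_k(V_n)=\mu_k(I_k)$, which by definition equals
\[
\mu_k(I_k)=\frac{1}{|I_k|}\int_{I_k} f(x)\,dx.
\]
Averaging over $\mathcal{A}_n$ gives
\[
\mu_{\mathcal{A}_n}(V_n)=\frac{1}{\#\mathcal{A}_n}\sum_{k\in\mathcal{A}_n}\frac{1}{|I_k|}\int_{I_k} f(x)\,dx.
\]

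Next I would replace the local averages by pointwise values of $f$ at the centers. Since $f\in C([0,1])$ is uniformly continuous with some modulus $\omega_f$, for each $k$ one has
\[
\left|\frac{1}{|I_k|}\int_{I_k} f(x)\,dx-f(c_k)\right|\le \omega_f(l_k/2).
\]
Because $l_k=e^{-\lambda k}\to 0$ and $k\ge n$ for every $k\in\mathcal{A}_n$, the bound $\omega_f(l_n/2)\to 0$ as $n\to\infty$ controls this error uniformly in $k\in\mathcal{A}_n$. Therefore
\[
\mu_{\mathcal{A}_n}(V_n)=\frac{1}{\#\mathcal{A}_n}\sum_{k\in\mathcal{A}_n} f(c_k)+o(1).
\]

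Finally I would apply assumption \ref{a.distributed} to the continuous function $f$, which yields
\[
\frac{1}{\#\mathcal{A}_n}\sum_{k\in\mathcal{A}_n} f(c_k)\longrightarrow \int_0^1 f(x)\phi(x)\,dx.
\]
Combining the two previous displays gives the claimed asymptotic $\mu_{\mathcal{A}_n}(V_n)=\int f(x)\phi(x)\,dx+o(1)$.

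There is no real obstacle here: assumption \ref{a.uniform} is used only to ensure disjointness (so the masses at the different $I_k$ do not double-count), continuity of $f$ together with $l_k\to 0$ handles the local averaging, and \ref{a.distributed} supplies the limit. The \emph{log-average spacing} assumption \ref{a.summable} is not needed at this stage; it will only enter later when one must control energies (logarithmic singularities), not total masses.
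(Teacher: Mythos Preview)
Your proof is correct and follows essentially the same route as the paper: use disjointness from \ref{a.uniform} to write $\mu_{\mathcal{A}_n}(V_n)$ as an average of $\frac{1}{|I_k|}\int_{I_k}f$, replace each local average by $f(c_k)$ via uniform continuity and $l_k\to 0$, and then apply \ref{a.distributed}. The only cosmetic difference is that you phrase the continuity step through the modulus $\omega_f$ while the paper uses an $\eps$--$\delta$ argument; your remark that \ref{a.summable} is unnecessary here is also consistent with the paper's use of only \ref{a.distributed} and \ref{a.uniform}.
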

\begin{proof} Since for large $n$ the intervals in
$$\{I_k:k\in\mathcal{A}_n\}$$
are disjoint (see Remark \ref{r.I_k.are.disjoint}), then $\mu_k(V_n)=\frac{1}{|I_k|}\int_{I_k}f(x)\,dx.$
Hence,
\begin{align*}
\mu_{\mathcal{A}_n}(V_n)=\frac{1}{\#\mathcal{A}_n}\sum_{k\in \mathcal{A}_n}\frac{1}{|I_k|}\int_{I_k}f(x)\,dx.
\end{align*}
Due to the uniform continuity of $f$ on the interval $[0,1]$, for every $\eps>0$, there exists $\delta>0$ such that 
$$|f(x)-f(y)|<\eps \quad \text{ if }\quad |x-y|<\delta.$$
 Since the lengths of the intervals $I_k$ approach 0, then there exists $N\in\N$ such that for every $k\geq N$ we have 
$$|f(x)-f(c_i)|<\eps \quad \text{ if }\quad x\in I_k.$$
Therefore, for every $n\geq N$ and every $k\in \mathcal{A}_n$, we have 
$$|f(x)-f(c_k)|<\eps \quad \text{ if }\quad x\in I_k.$$
It follows that for large enough $n$, we have
\begin{align*}
\left|\mu_{\mathcal{A}_n}(V_n)-\frac{1}{\#\mathcal{A}_n}\sum_{i\in \mathcal{A}_n}f(c_i)\right|<\eps.
\end{align*}
As the centers are distributed with respect to $\phi(x)$ \ref{a.distributed}, it follows that
\begin{align*}
\left|\mu_{\mathcal{A}_n}(V_n)-\left(\int f(x)\phi(x)\,dx+o(1)\right)\right|<\eps.
\end{align*}
Since $\eps>0$ was arbitrary, then the result holds.
\end{proof}


\subsection{Asymptotic behavior of outer-interaction}\label{s.outer}
In this section, we are interested in the asymptotic behavior of the outer-interaction sum:
\begin{align*}
\frac{1}{(\#\mathcal{A}_n)^2}\sum_{i\neq j}I(\mu_{i},\mu_{j}),
\end{align*}
where the sum is taken over $i,j\in\mathcal{A}_n$ and $i\neq j$. It is the outer-interaction sum that gives the limit point in \ref{property}:

\begin{lemma}\label{l.outer}
We have 
\begin{align}
\frac{1}{(\#\mathcal{A}_n)^2}\sum_{i\neq j}I(\mu_{i},\mu_{j})=I(f(x)\phi(x)dx)+o(1),
\end{align}
where $i,j\in\mathcal{A}_n$.
\end{lemma}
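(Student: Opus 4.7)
The plan is to compare each pairwise interaction $I(\mu_i,\mu_j)$ with the corresponding ``point-mass'' interaction $f(c_i)f(c_j)(-\log|c_i-c_j|)$, show that the total error vanishes, and then apply Lemma~\ref{l.SLLN.non.random} (with $n'=n''=n$) to identify the limit as $I(f(x)\phi(x)\,dx)$. Concretely, I would unfold
\[
I(\mu_i,\mu_j)=\frac{1}{|I_i||I_j|}\iint_{I_i\times I_j}(-\log|x-y|)f(x)f(y)\,dx\,dy
\]
and decompose the integrand as $(-\log|c_i-c_j|)f(c_i)f(c_j)+A_{ij}(x,y)+B_{ij}(x,y)$, where
\[
A_{ij}(x,y)=\log\tfrac{|c_i-c_j|}{|x-y|}\cdot f(x)f(y),\qquad B_{ij}(x,y)=(-\log|c_i-c_j|)\bigl[f(x)f(y)-f(c_i)f(c_j)\bigr].
\]

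For $A_{ij}$, the estimate $\bigl||x-y|-|c_i-c_j|\bigr|\le(l_i+l_j)/2$ together with gap control~\ref{a.uniform} yields $|x-y|/|c_i-c_j|\in[1-\eps,1+\eps]$ uniformly over $i\neq j$ in $\mathcal{A}_n$ for $n$ large, and hence $|A_{ij}(x,y)|\le 2\eps\|f\|_\infty^2$. For $B_{ij}$, uniform continuity of $f$ on $[0,1]$ combined with $l_k\to 0$ produces, for any prescribed $\eta>0$, the uniform bound $|f(x)f(y)-f(c_i)f(c_j)|\le\eta$ on $I_i\times I_j$ once $n$ is sufficiently large; since $c_i,c_j\in(0,1)$ imply $-\log|c_i-c_j|\ge 0$, this gives $|B_{ij}(x,y)|\le\eta\cdot(-\log|c_i-c_j|)$. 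Averaging and summing,
\[
\frac{1}{(\#\mathcal{A}_n)^2}\sum_{i\neq j}\bigl|I(\mu_i,\mu_j)-f(c_i)f(c_j)(-\log|c_i-c_j|)\bigr|\le 2\eps\|f\|_\infty^2+\eta\cdot\frac{1}{(\#\mathcal{A}_n)^2}\sum_{i\neq j}(-\log|c_i-c_j|).
\]

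Lemma~\ref{l.SLLN.non.random} applied with the constant function $1$ shows that the last double sum converges to the finite quantity $I(\phi(x)\,dx)$, so the right-hand side can be made arbitrarily small by first choosing $\eta$ and $\eps$ small, then $n$ large. The same lemma applied to $f$ gives
\[
\frac{1}{(\#\mathcal{A}_n)^2}\sum_{i\neq j}f(c_i)f(c_j)(-\log|c_i-c_j|)\longrightarrow I(f(x)\phi(x)\,dx),
\]
and combining the two estimates yields the claim.

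The main technical subtlety, and the only place where one must be careful, is the contribution of $B_{ij}$: the factor $-\log|c_i-c_j|$ is large for nearby centers, but this is offset by the small uniform-continuity constant $\eta$ precisely because the $\log$-average spacing assumption~\ref{a.summable} (packaged by Lemma~\ref{l.SLLN.non.random}) keeps $(\#\mathcal{A}_n)^{-2}\sum_{i\neq j}(-\log|c_i-c_j|)$ bounded. This accounting replaces what would otherwise require a more cumbersome case split into ``close'' versus ``far'' pairs.
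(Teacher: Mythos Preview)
Your argument is correct and follows essentially the same route as the paper: the paper packages your $A_{ij}+B_{ij}$ estimate into a standalone Lemma~\ref{l.estimate.log} (with a single small parameter $\eps$ playing the role of both your $\eps$ and $\eta$), obtains the bound $\bigl(2K(-\log|c_i-c_j|)+K^2\bigr)\eps$ for each pair, and then applies Lemma~\ref{l.SLLN.non.random} twice exactly as you do. The only cosmetic difference is your explicit split of the error into the ``log--ratio'' part $A_{ij}$ and the ``$f$--continuity'' part $B_{ij}$ with independent parameters, which makes the dependence on the two assumptions (\ref{a.uniform} versus $l_k\to 0$) a bit more transparent.
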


To show Lemma \ref{l.outer}, we want to estimate $(-\log |x-y|)$ by $(-\log |c-c'|)$ where $x$ and $y$ are in intervals with centers $c$ and $c'$, respectively. The next lemma allows us to do that.

\begin{lemma}\label{l.estimate.log}
Let $f$ be any continuous function on $[0,1]$ and let $J,J'$ be two disjoint intervals in $[0,1]$ with  lengths $r,r'$ and centers $c,c'$, respectively. Define
\begin{align*}
\mu:= \frac{1}{r}\left.f(x)\,dx\right|_J \quad \text{and}\quad \mu':= \frac{1}{r'}\left.f(x)\,dx\right|_{J'}. 
\end{align*}
Let $\eps>0$. If
\begin{align}\label{e.estimate.log.condition}
\frac{r+r'}{2|c-c'|}\leq (1-e^{-\eps}),
\end{align}
then
\begin{align*}
\left|I(\mu,\mu')-(-\log |c-c'|)f(c)f(c')\right|\leq(2K(-\log |c-c'|)+K^2)\eps,
\end{align*}
where $K=\norm{f}_\infty$.
\end{lemma}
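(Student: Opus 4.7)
The plan is to exploit the fact that the gap hypothesis \eqref{e.estimate.log.condition} is exactly the threshold that makes $|x-y|$ comparable to $|c-c'|$ in a \emph{multiplicative} sense, which is precisely what one needs when passing through a logarithm. I would first extract a clean two-sided logarithm bound from the hypothesis, then split the error into a ``log-oscillation'' piece and an ``$f$-averaging'' piece, and finally bound each piece by the corresponding summand on the right-hand side.

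The first step is to establish
$$e^{-\varepsilon}|c-c'| \le |x-y| \le e^{\varepsilon}|c-c'| \qquad \text{for all } x \in J,\ y \in J'.$$
Since $\bigl||x-y| - |c-c'|\bigr| \le \tfrac{r+r'}{2} \le (1-e^{-\varepsilon})|c-c'|$, one gets $|x-y| \ge e^{-\varepsilon}|c-c'|$ at once, and $|x-y| \le (2-e^{-\varepsilon})|c-c'| \le e^{\varepsilon}|c-c'|$ by the elementary inequality $2 \le e^{\varepsilon}+e^{-\varepsilon}$. Taking logarithms yields $\bigl|(-\log|x-y|) - (-\log|c-c'|)\bigr| \le \varepsilon$ uniformly on $J\times J'$.

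Next, I would write
\begin{align*}
I(\mu,\mu') - (-\log|c-c'|)f(c)f(c')
&= \tfrac{1}{rr'}\iint_{J\times J'}\!\bigl[(-\log|x-y|) - (-\log|c-c'|)\bigr]f(x)f(y)\,dx\,dy \\
&\quad + (-\log|c-c'|)\bigl[\overline{f}_{J}\,\overline{f}_{J'} - f(c)f(c')\bigr],
\end{align*}
where $\overline{f}_J = \tfrac{1}{r}\int_J f$. The first piece is at most $K^2\varepsilon$, directly from the logarithm estimate and $|f| \le K$. For the second piece, the product identity $|\overline{f}_J\overline{f}_{J'} - f(c)f(c')| \le K\bigl(|\overline{f}_J - f(c)| + |\overline{f}_{J'} - f(c')|\bigr)$ reduces the task to controlling each of $|\overline{f}_J - f(c)|$ and $|\overline{f}_{J'} - f(c')|$ by $\varepsilon$; then the second piece contributes $2K(-\log|c-c'|)\varepsilon$, and adding the two bounds gives the claim.

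The main obstacle is the last averaging estimate $|\overline{f}_J - f(c)| \le \varepsilon$. The hypothesis does force $r \le 2(1-e^{-\varepsilon})|c-c'| \le 2\varepsilon$, so that uniform continuity of $f$ on the compact interval $[0,1]$ makes the oscillation of $f$ on $J$ go to zero with $\varepsilon$; the delicate point is that, to match the factor $\varepsilon$ appearing in the claimed inequality, one must tie the modulus of continuity of $f$ directly to $\varepsilon$. I expect this is either handled by implicitly requiring $\varepsilon$ small enough that $\omega_f(2\varepsilon) \le \varepsilon$, or by reading the conclusion asymptotically with $\varepsilon$ simultaneously controlling the logarithm tolerance and the oscillation of $f$. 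Everything else is a routine computation.
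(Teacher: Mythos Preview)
Your approach is essentially the paper's: both arguments use the gap hypothesis to obtain the uniform bound $\bigl|(-\log|x-y|)-(-\log|c-c'|)\bigr|\le\eps$ on $J\times J'$, and then combine this with an $f$-oscillation estimate via a product inequality of the form $|AB-ab|\le |A|\,|B-b|+|b|\,|A-a|$. The only cosmetic difference is that the paper works pointwise and integrates at the end, while you integrate first and apply the product inequality to the averages $\overline{f}_J,\overline{f}_{J'}$.

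The obstacle you flag is real, and the paper handles it exactly as you anticipated. In the paper's proof the step reads: ``there exists $\delta>0$ such that $|f(a)-f(b)|<\eps$ if $|a-b|<\delta$. If $0<r,r'<\delta$, we have $|f(x)-f(c)|<\eps$\dots'' The smallness condition $r,r'<\delta$ is not derivable from the stated hypothesis \eqref{e.estimate.log.condition} alone; it is an implicit extra assumption, justified in every application by the fact that $l_k\to 0$ (see the proof of Lemma~\ref{l.outer}, where the phrase ``since $l_k$ decrease to $0$ as $k\to\infty$, then for all large enough $n$ we may apply Lemma~\ref{l.estimate.log}'' carries precisely this weight). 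So your reading---that $\eps$ must simultaneously govern the logarithm tolerance and the $f$-oscillation, with $r,r'$ taken small enough---matches the paper's intent.
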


\begin{proof}
Let us first prove that for $a,b>0$, we have
\begin{align*}
|\log a-\log b|\leq \eps,
\end{align*}
if 
\begin{align*}
|a-b|\leq b(1-e^{-\eps}).
\end{align*}
We have that 
\begin{align*}
|\log a-\log b|\leq \eps
\end{align*}
holds if and only if 
\begin{align*}
-\eps\leq \log (a/b)\leq \eps,
\end{align*}
if and only if
\begin{align*}
be^{-\eps}\leq a\leq be^\eps,
\end{align*}
if and only if
\begin{align*}
-b(1-e^{-\eps})\leq a-b\leq b(e^\eps-1).
\end{align*}
Since $(1-e^{-\eps})\leq (e^\eps-1)$, then
\begin{align*}
|\log a-\log b|\leq \eps
\end{align*}
holds when 
\begin{align*}
|a-b|\leq b(1-e^{-\eps}).
\end{align*}

For any two disjoint intervals $J,J'\in [0,1]$ with centers $c,c'$ and with lengths $r,r'$ respectively, we have 
\begin{align*}
||x-y|-|c-c'||\leq|(x-c)+(c'-y)|\leq \frac{r+r'}{2}.
\end{align*}

Since assumption \eqref{e.estimate.log.condition} is equivalent to
\begin{align*}
\frac{r+r'}{2}\leq |c-c'|(1-e^{-\eps}),
\end{align*}
then
\begin{align*}
|(-\log |x-y|)-(-\log |c-c'|)|<\eps.
\end{align*}

Since $f$ is uniformly continuous on $[0,1]$, then there exists $\delta>0$ such that
$|f(a)-f(b)|<\eps$ if $|a-b|<\delta$. If $0<r,r'<\delta$, we have that $|f(x)-f(c)|<\eps$ and $|f(y)-f(c')|<\eps.$ We would like to combine the three inequalities. 

Suppose $A,a,B,b\in\R$ and $\eps_a,\eps_b>0$ such that
\begin{align*}
|A-a|<\eps_a \quad \text{and} \quad |B-b|<\eps_b.
\end{align*}
We have that 
\begin{align*}
|AB-ab|\leq |AB-Ab|+|Ab-ab|\leq (|A|\eps_b+|b|\eps_a).
\end{align*}
One application of the above gives
\
\begin{align*}
|f(x)f(y)-f(c)f(c')|\leq 2K\eps,
\end{align*}
where $K=\norm{f}_\infty$.
A third application yields
\begin{align*}
|(-\log |x-y|)f(x)f(y)-(-\log |c-c'|)f(c)f(c')|<(2K\eps(-\log |c-c'|)+K^2\eps).
\end{align*}
Integrating by 
\begin{align*}
 \frac{1}{r}\left.dx\right|_J \quad \text{and}\quad  \frac{1}{r'}\left.dy\right|_{J'} 
\end{align*}
finishes the proof.
\end{proof}

Now that we can estimate $(-\log |x-y|)$ by $(-\log |c-c'|)$ where $x$ and $y$ are in intervals with centers $c$ and $c'$, respectively, we are ready to estimate
\begin{align*}
\frac{1}{(\#\mathcal{A}_n)^2}\sum_{i\neq j}I(\mu_{i},\mu_{j}),
\end{align*}
by
\begin{align*}
\frac{1}{(\#\mathcal{A}_n)^2}\sum_{i\neq j}(-\log |c_i-c_j|)f(c_i)f(c_j).
\end{align*}
Let us go back to Lemma \ref{l.outer} and prove the asymptotic behavior of the outer-interaction:

\begin{proof}[Proof of Lemma~\ref{l.outer}]
Let $\epsilon>0$ be given and let $K=\norm{f}_\infty$. The gap control \ref{a.uniform} guarantees that there exists $N\in \N$ such that for every $n\geq N$ and every $i\neq j$, where $i,j\in\mathcal{A}_n$, we have 
\begin{align*}
\frac{l_i+l_j}{2|c_i-c_j|}\leq (1-e^{-\eps}),
\end{align*}
which is the condition \eqref{e.estimate.log.condition} in Lemma \ref{l.estimate.log}. Since $l_k$ decrease to $0$ as $k\rightarrow \infty$, then for all large enough $n$ we may apply Lemma \ref{l.estimate.log} to get
\begin{align*}
|I(\mu_{i},\mu_{j})-(-\log |c_i-c_j|)f(c_i)f(c_j)|\leq (2K(-\log |c_i-c_j|)+K^2)\eps,
\end{align*}
for every $n\geq N$ and every $i\neq j$, where $i,j\in\mathcal{A}_n$. Adding this up for $i\neq j$ where $i,j\in\mathcal{A}_n$ and then dividing by $(\#\mathcal{A}_n)^2$, gives us:
\begin{align*}
\left|\frac{1}{(\#\mathcal{A}_n)^2}\sum_{i\neq j}I(\mu_{i},\mu_{j}) \right.
-\left.\frac{1}{(\#\mathcal{A}_n)^2}\sum_{i\neq j} (-\log |c_i-c_j|)f(c_i)f(c_j)\right|\\
\leq\frac{2K\eps}{(\#\mathcal{A}_n)^2}\sum_{i\neq j} (-\log |c_i-c_j|)+K^2\eps.
\end{align*}
We will apply Lemma \ref{l.SLLN.non.random} twice to the last two sums. We apply the lemma to the last sum by taking $f=1$ in Lemma \ref{l.SLLN.non.random}, and then we apply the lemma again for arbitrary $f$ in Lemma \ref{l.SLLN.non.random} to get:
\begin{align*}
\left|\lim_{n\rightarrow\infty}\frac{1}{(\#\mathcal{A}_n)^2}\sum_{i\neq j}I(\mu_{i},\mu_{j})-I(f(x)\phi(x)dx)\right|\\
\leq(2KI(\phi(x)dx)+K^2)\eps.
\end{align*}
Lemma \ref{l.SLLN.non.random} also informs us that the energy of $\phi(x)dx$ is finite, hence $0<(2KI(\phi(x)dx)+K^2)\eps<\infty$. As  $\eps>0$ is arbitrary, it follows that
\begin{align*}
\frac{1}{(\#\mathcal{A}_n)^2}\sum_{i\neq j}(-\log |c_i-c_j|)f(c_i)f(c_j)\rightarrow  I(f(x)\phi(x)dx).
\end{align*}
Lemma~\ref{l.outer} holds.

\end{proof}

\subsection{Asymptotic behavior of self-interaction} \label{s.self}
In this section, we will to control the self-interaction:
\begin{lemma} If $l_k= e^{-\lambda k}$ where $\lambda>0$, then 
\begin{align*}
\frac{1}{(\#\mathcal{A}_n)^2}\sum_{k\in\mathcal{A}_n} I(\mu_k) =O\left(  \frac{1}{(\#\mathcal{A}_n)^2}\sum_{k\in\mathcal{A}_n} k+o(1)\right).
\end{align*}
\end{lemma}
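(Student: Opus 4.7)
The plan is to bound each self-energy $I(\mu_k)$ individually by a clean expression linear in $k$, and then sum and divide by $(\#\mathcal{A}_n)^2$. The key inputs are the uniform bound $f(x) \leq K := \|f\|_\infty$ and a scaling computation on the single interval~$I_k$.

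\textbf{Step 1 (Crude pointwise bound on the density).} Since the density of $\mu_k$ with respect to Lebesgue measure on $I_k$ is $f(x)/l_k$, we have
\begin{align*}
I(\mu_k) \;=\; \int_{I_k}\!\!\int_{I_k} (-\log|x-y|)\,\frac{f(x)f(y)}{l_k^{\,2}}\,dx\,dy \;\leq\; \frac{K^2}{l_k^{\,2}} \int_{I_k}\!\!\int_{I_k} (-\log|x-y|)\,dx\,dy.
\end{align*}
Note that $|x-y| \leq l_k < 1$ for all $k$ large enough, so $(-\log|x-y|) \geq 0$ on $I_k \times I_k$ and the crude bound is legitimate.

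\textbf{Step 2 (Scaling computation on a single interval).} Translating $I_k$ to $[0,l_k]$ and substituting $x = l_k s$, $y = l_k t$ gives
\begin{align*}
\int_{I_k}\!\!\int_{I_k} (-\log|x-y|)\,dx\,dy \;=\; l_k^{\,2}\int_0^1\!\!\int_0^1 \bigl(-\log l_k - \log|s-t|\bigr)\,ds\,dt \;=\; l_k^{\,2}\bigl(-\log l_k + \tfrac{3}{2}\bigr),
\end{align*}
where we have used the standard evaluation $\int_0^1\!\!\int_0^1(-\log|s-t|)\,ds\,dt = \tfrac{3}{2}$ (a short direct integration). Plugging $l_k = e^{-\lambda k}$ yields $-\log l_k = \lambda k$, and combining with Step~1 gives the clean pointwise bound
\begin{align*}
I(\mu_k) \;\leq\; K^2\bigl(\lambda k + \tfrac{3}{2}\bigr).
\end{align*}

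\textbf{Step 3 (Summing and normalizing).} Summing over $k \in \mathcal{A}_n$ and dividing by $(\#\mathcal{A}_n)^2$, we obtain
\begin{align*}
\frac{1}{(\#\mathcal{A}_n)^2}\sum_{k \in \mathcal{A}_n} I(\mu_k) \;\leq\; \frac{\lambda K^2}{(\#\mathcal{A}_n)^2}\sum_{k \in \mathcal{A}_n} k \;+\; \frac{3K^2}{2\,\#\mathcal{A}_n},
\end{align*}
and since $\#\mathcal{A}_n = n \to \infty$, the second term is $o(1)$. This is exactly the $O$-estimate in the statement.

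There is no real obstacle here; the only mildly delicate point is the scaling step, which cleanly separates the $-\log l_k$ contribution (the piece that grows like $k$) from the bounded remainder. Essentially, the logarithmic energy on an interval of length $l_k$ is governed by $|\log l_k|$, and the exponential decay $l_k = e^{-\lambda k}$ converts this into exactly the linear-in-$k$ behavior on the right-hand side.
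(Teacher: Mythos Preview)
Your proof is correct and follows essentially the same route as the paper: the paper also uses the scaling identity $I\bigl(\tfrac{1}{l_k}\,dx|_{I_k}\bigr)=-\log l_k + I(dx|_{[0,1]})$ to extract the $-\log l_k=\lambda k$ growth and then sums over $k\in\mathcal{A}_n$. The only difference is that you make the passage from $\mu_k$ (with density $f/l_k$) to the uniform measure explicit via the crude bound $f\le K$, whereas the paper leaves this implicit.
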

\begin{proof}
By shifting and a change of variables, we get
\begin{align*}
I(\frac{1}{l_k} dx|_{I_{i}}) = -\log l_k + I(\left. dx\right|_{[0,1]}) = -\log l_k \cdot (1+o(1)).
\end{align*}
If $l_k= e^{-\lambda k}$, then adding the above over $k\in\mathcal{A}_n$ gives us our result.

\end{proof}
If the self-interaction sum vanishes in the limit, then we will be able to finish the proof with a single-level re-distribution. Let us see what the self-interaction tells us:

\begin{lemma}\label{l.self.constant.one} If $\mathcal{A}_n:=\{n,\ldots, p(n)-1\}$ where $p(n)$ is an integer-valued function such that $p(n)\geq 2n$, then
\begin{align*}
\frac{1}{(\#\mathcal{A}_n)^2}\sum_{k\in\mathcal{A}_n} k=\frac{p(n)+n-1}{2(p(n)-n)}.
\end{align*}
If $p(n)>>n$, then the right-hand side is close to $\frac{1}{2}$.
\end{lemma}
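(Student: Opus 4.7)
The plan is to recognize the sum as an arithmetic series and apply the standard closed form. Write $N=p(n)-n$ so that $\#\mathcal{A}_n = N$, and observe that $\mathcal{A}_n = \{n, n+1,\ldots, p(n)-1\}$ is an arithmetic progression of $N$ consecutive integers with first term $n$ and last term $p(n)-1$. The Gauss formula then gives
\[
\sum_{k\in\mathcal{A}_n} k \;=\; \frac{N\bigl(n + (p(n)-1)\bigr)}{2}.
\]
Dividing by $(\#\mathcal{A}_n)^2 = N^2 = (p(n)-n)^2$ immediately yields the claimed identity
\[
\frac{1}{(\#\mathcal{A}_n)^2}\sum_{k\in\mathcal{A}_n} k \;=\; \frac{p(n)+n-1}{2(p(n)-n)}.
\]

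For the asymptotic claim, suppose $p(n)\gg n$, i.e.\ $p(n)/n\to\infty$. Dividing numerator and denominator of the closed form by $p(n)$ gives
\[
\frac{p(n)+n-1}{2(p(n)-n)} \;=\; \frac{1 + n/p(n) - 1/p(n)}{2\bigl(1 - n/p(n)\bigr)},
\]
and both $n/p(n)$ and $1/p(n)$ tend to $0$, so the ratio tends to $\tfrac12$. This step is essentially a remark; there is no real obstacle since the whole lemma is a direct calculation on a finite arithmetic progression. The only subtlety worth flagging is that the hypothesis $p(n)\ge 2n$ ensures $\#\mathcal{A}_n\ge n>0$ so the division is legitimate, but $p(n)\ge 2n$ alone is \emph{not} enough to get the limiting value $\tfrac12$ (one actually needs $p(n)/n\to\infty$); this matches the statement, which phrases the approximation conditionally on $p(n)\gg n$.
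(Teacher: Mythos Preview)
Your proof is correct and follows the same approach as the paper: compute the arithmetic sum via the Gauss formula $S=\frac{(p(n)+n-1)(\#\mathcal{A}_n)}{2}$ and divide by $(\#\mathcal{A}_n)^2=(p(n)-n)^2$. Your added discussion of the asymptotic (dividing through by $p(n)$) and the remark that $p(n)\ge 2n$ guarantees $\#\mathcal{A}_n>0$ but not the limit $\tfrac12$ are correct elaborations that the paper leaves implicit.
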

\begin{proof}
Let 
\begin{align*}
S=\sum_{k\in\mathcal{A}_n} k.
\end{align*}
Then the arithmetic sum becomes
$$S=\frac{(p(n)+n-1)(\#\mathcal{A}_n)}{2}. $$
Since $\#\mathcal{A}_n= p(n)-n$, then dividing by $(\#\mathcal{A}_n)^2$ we get what we want.
\end{proof}

\begin{remark}\label{r.single.level.not.enough}
Lemma \ref{l.self.constant.one} tells us that no matter how many intervals are included in our single-level of re-distribution, the self-interaction sum will never vanish. 

But since we can bound the self-interaction sum uniformly for all $n$, we will be able to apply a multi-level re-distribution in Section \ref{s.critical}. That is, we will take the average of measures $\hat{\mu}_n$ to handle the self-interaction sum.
\end{remark}


\subsection{The proof of Proposition \ref{p.S.alpha=1.total.energy}}\label{s.critical}

In this section, we will use a multi-level re-distribution to show \ref{property} holds and prove Proposition \ref{p.S.alpha=1.total.energy}.

Let us first see where the asymptotic behavior of a single-level re-distribution leads:

\begin{proposition}[Single-level re-distribution]\label{p.aysmptotic.mu.critical.case} 
Let $l_k:= e^{-\lambda k}$ and $\lambda>0$. If assumptions \ref{a.distributed}-\ref{a.uniform} are satisfied, then
\begin{align*}
I(\hat{\mu}_n)=& \frac{I(f(x)\phi(x)\,dx)}{(\int f(x)\phi(x)\,dx)^2} +O\left(\frac{1}{(\int f(x)\phi(x)\,dx)^2}\right)+o(1)=O(1) ,
\end{align*}
where each $\hat{\mu}_n$ is the corresponding measure defined in \eqref{d.mu.hat}.
\end{proposition}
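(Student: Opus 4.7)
The plan is to assemble the proposition directly from the three asymptotic results already established for the ingredients of $\hat{\mu}_n$: the outer-interaction sum (Lemma~\ref{l.outer}), the self-interaction sum (the lemma in Section~\ref{s.self} combined with Lemma~\ref{l.self.constant.one}), and the total mass $\mu_{\mathcal{A}_n}(V_n)$ (Lemma~\ref{l.normalization}). First I would recall the identity
\begin{equation*}
I(\hat{\mu}_n) = \frac{1}{(\mu_{\mathcal{A}_n}(V_n))^2}\,I(\mu_{\mathcal{A}_n}),
\end{equation*}
from the construction in Section~\ref{s.construction}, and the decomposition~\eqref{e.partition.energy} of $I(\mu_{\mathcal{A}_n})$ into its self- and outer-interaction parts.

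Next, I would apply Lemma~\ref{l.outer} to conclude that the outer-interaction part converges to $I(f(x)\phi(x)\,dx)$ up to an $o(1)$ error. Then, using the computation of Section~\ref{s.self}, for each $k \in \mathcal{A}_n$ we have $I(\mu_k) = -\log l_k + o(1) = \lambda k + o(1)$, so
\begin{equation*}
\frac{1}{(\#\mathcal{A}_n)^2}\sum_{k\in\mathcal{A}_n} I(\mu_k) = \frac{\lambda}{(\#\mathcal{A}_n)^2}\sum_{k\in\mathcal{A}_n} k + o(1),
\end{equation*}
and Lemma~\ref{l.self.constant.one} with $p(n)=2n$ gives $\frac{1}{(\#\mathcal{A}_n)^2}\sum_{k\in\mathcal{A}_n} k = \frac{3n-1}{2n} = \tfrac{3}{2} + o(1)$. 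Thus the self-interaction contributes a bounded $O(1)$ term (specifically, tending to $\tfrac{3\lambda}{2}$), not an $o(1)$ one. Combining these yields
\begin{equation*}
I(\mu_{\mathcal{A}_n}) = I(f(x)\phi(x)\,dx) + O(1) + o(1).
\end{equation*}

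For the denominator, Lemma~\ref{l.normalization} gives $\mu_{\mathcal{A}_n}(V_n) = \int f(x)\phi(x)\,dx + o(1)$, which is bounded away from zero since $f$ is positive and $\phi>0$ almost everywhere. Dividing and expanding, the $o(1)$ perturbation in the denominator contributes only another $o(1)$ to the quotient, while the bounded numerator term of order $O(1)$ is scaled by $1/(\int f\phi\,dx)^2$, producing the advertised $O\bigl(1/(\int f\phi\,dx)^2\bigr)$ error. The final $O(1)$ bound follows because both the leading term and the error term are bounded as $n\to\infty$.

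There is no substantial obstacle here: the proof is essentially a bookkeeping combination of the three lemmas. The only subtle point is recognizing that the self-interaction cannot be absorbed into $o(1)$ in the single-level setting (this is precisely Remark~\ref{r.single.level.not.enough}), which is why the proposition only claims $O(1)$ rather than convergence to $I(f\phi\,dx)/(\int f\phi\,dx)^2$. This non-vanishing $O(1)$ discrepancy is what motivates the multi-level averaging used subsequently in Section~\ref{s.critical} to finish the proof of Proposition~\ref{p.S.alpha=1.total.energy}.
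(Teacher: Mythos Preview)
Your proposal is correct and follows exactly the paper's approach: decompose $I(\hat{\mu}_n)$ via the normalization identity, split $I(\mu_{\mathcal{A}_n})$ into self- and outer-interaction sums, and invoke Lemma~\ref{l.outer}, the self-interaction lemma with Lemma~\ref{l.self.constant.one}, and Lemma~\ref{l.normalization} respectively. One minor imprecision: your formula $I(\mu_k)=-\log l_k+o(1)$ omits the factor coming from the density $f$ (the change of variables gives $I(\mu_k)\approx f(c_k)^2(-\log l_k)$, so the self-interaction limit is not exactly $3\lambda/2$), but since $f$ is bounded this does not affect the $O(1)$ conclusion, which is all the proposition asserts.
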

\begin{proof} We have that
\begin{align*}
I(\hat{\mu}_n)=\frac{1}{(\mu_{\mathcal{A}_n}(V_n))^2}I(\mu_{\mathcal{A}_n}).
\end{align*}
Breaking down the last, we get
\begin{align*}
I\left(\mu_{\mathcal{A}_n}\right)=\frac{1}{(\#\mathcal{A}_n)^2}\sum_{k\in\mathcal{A}_n} I(\mu_k) +\frac{1}{(\#\mathcal{A}_n)^2}\sum_{i\neq j}I(\mu_{i},\mu_{j}).
\end{align*}
Applying Lemma \ref{l.outer} to the outer-interaction sum and Lemma \ref{l.self.constant.one} to the self-interaction sum, and lastly, applying Lemma \ref{l.normalization} to the normalization $\mu_{\mathcal{A}_n}(V_n)$ completes the proof.

\end{proof}

Remark \ref{r.single.level.not.enough} tells us that using a single level re-distribution will not render $S$ to be re-distributable no matter how many intervals are included in $\{I_k:k\in\mathcal{A}_n\}$. We will need to take the average of $q(n)$ single-level re-distribution measures, where $q(n)$ is an integer-valued function such that $q(n)\rightarrow\infty$ as $n\rightarrow \infty$. 

Let $\hat{\mu}_n$ be a single-level re-distribution as defined in \eqref{d.mu.hat}. For each $n$, we consider a \textit{multi-level re-distribution} probability measure:
\begin{align*}
\mu^m:=\frac{1}{\#\mathcal{B}_m}\sum_{s\in \mathcal{B}_m} \hat{\mu}_{2^{s}m},
\end{align*}
where $$\mathcal{B}_m:=\{0,\ldots,{q(m)-1}\}.$$
Since each $\hat{\mu}_n$ is supported on
\begin{align*}
V_n:=\bigcup_{k\in\mathcal{A}_n}I_k=\bigcup_{k=n}^{2n-1}I_k,
\end{align*}
then $\mu^m$ is supported on
\begin{align*}
V_n, V_{2n},V_{2^2n},\ldots ,V_{2^{q(n)-1}n}.
\end{align*}
See Remark \ref{r.I_k.are.disjoint} for details.

Our convex measure can now be partitioned into a new self-interaction sum and a new outer-interaction sum:
\begin{align*}
I(\mu^m)=\frac{1}{(\#\mathcal{B}_m)^2}\sum_{s\in \mathcal{B}_m} I(\hat{\mu}_{2^{s}m})+\frac{1}{(\#\mathcal{B}_m)^2}\sum_{\substack{s,t\in \mathcal{B}_m\\ s\neq t}} I(\hat{\mu}_{2^{s}m},\hat{\mu}_{2^{t}m}).
\end{align*}
Proposition \ref{p.aysmptotic.mu.critical.case} tells us that 
$$I(\hat{\mu}_{n})=O(1).$$
Hence,
\begin{align*}
\frac{1}{(\#\mathcal{B}_m)^2}\sum_{s\in \mathcal{B}_m} I(\hat{\mu}_{2^{s}m})=&\frac{1}{(\#\mathcal{B}_m)^2}\sum_{s\in \mathcal{B}_m} O(1)\\
\leq& \frac{O(1) }{(\#\mathcal{B}_m)}\rightarrow 0\quad \text{ as }\quad m\rightarrow\infty.
\end{align*}
That is, the self-interaction sum vanishes. The outer-interaction sum gives what we aim:

\begin{lemma} Assume \ref{a.distributed}-\ref{a.uniform}. As $m\rightarrow\infty$ we have that \label{l.outer.critical}
\begin{align*} 
\frac{1}{(\#\mathcal{B}_m)^2}\sum_{s\neq t} I(\hat{\mu}_{2^{s}m},\hat{\mu}_{2^{t}m})\rightarrow \frac{I(f(x)\phi(x)\,dx)}{(\int f(x)\phi(x)\,dx)^2},
\end{align*}
where the sum is over $s\neq t$ and $s,t\in \mathcal{B}_m$.
\end{lemma}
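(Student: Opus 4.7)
The plan is to reduce the cross-interaction $I(\hat\mu_{2^sm},\hat\mu_{2^tm})$ to exactly the double-sum estimate that was already established in the proof of Lemma \ref{l.outer}, and then show that the off-diagonal average of $\mathcal{B}_m\times\mathcal{B}_m$ picks up a single limit. First, by bilinearity of $I$,
\[
I(\hat\mu_{2^sm},\hat\mu_{2^tm})=\frac{1}{\mu_{\mathcal{A}_{2^sm}}(V_{2^sm})\,\mu_{\mathcal{A}_{2^tm}}(V_{2^tm})}\cdot\frac{1}{\#\mathcal{A}_{2^sm}\cdot\#\mathcal{A}_{2^tm}}\sum_{\substack{i\in\mathcal{A}_{2^sm}\\ j\in\mathcal{A}_{2^tm}}}I(\mu_i,\mu_j).
\]
For $s\neq t$ in $\mathcal{B}_m$ the index sets $\mathcal{A}_{2^sm}$ and $\mathcal{A}_{2^tm}$ are disjoint, so every pair in the sum automatically has $i\neq j$, and all of them lie inside $\mathcal{A}_{m,q(m)}$.

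Next I would apply the gap control \ref{a.uniform} together with Lemma \ref{l.estimate.log}, exactly as in the proof of Lemma \ref{l.outer}, to approximate
\[
|I(\mu_i,\mu_j)-(-\log|c_i-c_j|)f(c_i)f(c_j)|\leq(2K(-\log|c_i-c_j|)+K^2)\eps,
\]
with $K=\|f\|_\infty$, valid uniformly for all sufficiently large $m$ and all $i\in\mathcal{A}_{2^sm},\,j\in\mathcal{A}_{2^tm}$. Averaging this estimate and invoking Lemma \ref{l.SLLN.non.random} with $n'=2^sm$ and $n''=2^tm$ (both in $\{m,2m,\dots,2^{q(m)}m\}$) yields
\[
\frac{1}{\#\mathcal{A}_{2^sm}\cdot\#\mathcal{A}_{2^tm}}\sum_{i,j}(-\log|c_i-c_j|)f(c_i)f(c_j)\to I(f(x)\phi(x)\,dx),
\]
uniformly over $s\neq t$ in $\mathcal{B}_m$. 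Similarly Lemma \ref{l.normalization}, applied with $n=2^sm$, gives $\mu_{\mathcal{A}_{2^sm}}(V_{2^sm})\to\int f(x)\phi(x)\,dx$ uniformly in $s$ as $m\to\infty$. Together these imply
\[
\Bigl|I(\hat\mu_{2^sm},\hat\mu_{2^tm})-\frac{I(f(x)\phi(x)\,dx)}{(\int f(x)\phi(x)\,dx)^2}\Bigr|<\eps
\]
for all large $m$ and every off-diagonal $(s,t)\in\mathcal{B}_m\times\mathcal{B}_m$.

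Finally I would average: there are $\#\mathcal{B}_m(\#\mathcal{B}_m-1)$ off-diagonal pairs, so dividing by $(\#\mathcal{B}_m)^2$ gives a prefactor $1-1/\#\mathcal{B}_m$, which tends to $1$ since $\#\mathcal{B}_m=q(m)\to\infty$. This yields the announced limit. The only genuine obstacle is the uniformity of the two ingredients over $s\neq t$ in $\mathcal{B}_m$; this is, however, exactly the reason that Lemma \ref{l.SLLN.non.random} was stated with free indices $n',n''$ ranging over $\{n,2n,\dots,2^{q(n)}n\}$ rather than just at $n'=n''=n$, and the reason that \ref{a.uniform} was demanded uniformly over $\mathcal{A}_{n,q(n)}$. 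Both ingredients are therefore automatically uniform in $s,t$, and the off-diagonal average converges to the single value above, completing the proof.
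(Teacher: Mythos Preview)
Your proposal is correct and follows essentially the same route as the paper: expand $I(\hat\mu_{2^sm},\hat\mu_{2^tm})$ by bilinearity, use \ref{a.uniform} together with Lemma~\ref{l.estimate.log} to replace each $I(\mu_i,\mu_j)$ by $(-\log|c_i-c_j|)f(c_i)f(c_j)$, invoke Lemma~\ref{l.SLLN.non.random} (with $n'=2^sm$, $n''=2^tm$) and Lemma~\ref{l.normalization} to obtain the limit uniformly in $(s,t)$, and then average. Your explicit remark that the uniformity in $s,t$ is precisely why \ref{a.summable}, \ref{a.uniform} and Lemma~\ref{l.SLLN.non.random} are stated over all of $\mathcal{A}_{n,q(n)}$, and your handling of the $(1-1/\#\mathcal{B}_m)$ prefactor, make the argument slightly more explicit than the paper's version.
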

We will leave the proof of Lemma \ref{l.outer.critical} to the end of the section. Note that the vanishing of the self-interaction sum and Lemma \ref{l.outer.critical} gives us:

\begin{proposition}\label{p.total.energy.critical} For every $f\in C([0,1])$, we have that
\begin{align*}
I(\mu^m)=& \frac{I(f(x)\phi(x)\,dx)}{(\int f(x)\phi(x)\,dx)^2}+o(1).
\end{align*}
\end{proposition}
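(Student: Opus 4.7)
The plan is to follow the decomposition already introduced in the preamble to Lemma \ref{l.outer.critical}:
$$I(\mu^m)=\frac{1}{(\#\mathcal{B}_m)^2}\sum_{s\in \mathcal{B}_m} I(\hat{\mu}_{2^{s}m})+\frac{1}{(\#\mathcal{B}_m)^2}\sum_{s\neq t} I(\hat{\mu}_{2^{s}m},\hat{\mu}_{2^{t}m}).$$
The first (self-interaction) sum is dispatched immediately by Proposition \ref{p.aysmptotic.mu.critical.case}: it yields $I(\hat{\mu}_n)=O(1)$ uniformly for $n$ large, so the sum is at most $\#\mathcal{B}_m\cdot O(1)/(\#\mathcal{B}_m)^2=O(1/q(m))=o(1)$, precisely because $\#\mathcal{B}_m=q(m)\to\infty$. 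This is where the multi-level averaging beats the single-level attempt (see Remark \ref{r.single.level.not.enough}).

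The work therefore reduces to Lemma \ref{l.outer.critical}. For $s\neq t$ in $\mathcal{B}_m$ the index sets $\mathcal{A}_{2^s m}$ and $\mathcal{A}_{2^t m}$ are disjoint, so expanding the normalized measures gives
$$I(\hat{\mu}_{2^s m},\hat{\mu}_{2^t m})=\frac{1}{\mu_{\mathcal{A}_{2^s m}}(V_{2^s m})\,\mu_{\mathcal{A}_{2^t m}}(V_{2^t m})}\cdot\frac{1}{\#\mathcal{A}_{2^s m}\,\#\mathcal{A}_{2^t m}}\sum_{i,j}I(\mu_i,\mu_j),$$
summed over $i\in\mathcal{A}_{2^s m}$ and $j\in\mathcal{A}_{2^t m}$. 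I would then proceed exactly as in the proof of Lemma \ref{l.outer}: first apply gap control \ref{a.uniform} over the full range of indices in $\mathcal{A}_{n,q(n)}$ to invoke Lemma \ref{l.estimate.log} and replace each $I(\mu_i,\mu_j)$ by $(-\log|c_i-c_j|)f(c_i)f(c_j)$ up to a small error controlled by $\|f\|_\infty$ and $I(\phi(x)\,dx)$; then invoke Lemma \ref{l.SLLN.non.random}, whose hypothesis holds precisely for $n',n''\in\{m,2m,\ldots,2^{q(m)}m\}$, to convert the double sum into $I(f(x)\phi(x)\,dx)$. Lemma \ref{l.normalization} handles the two denominators, each tending to $\int f(x)\phi(x)\,dx$. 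Assembling these three limits yields
$$I(\hat{\mu}_{2^s m},\hat{\mu}_{2^t m})=\frac{I(f(x)\phi(x)\,dx)}{\bigl(\int f(x)\phi(x)\,dx\bigr)^2}+o(1)$$
uniformly in $s\neq t$ from $\mathcal{B}_m$. Since $\#\{(s,t)\in\mathcal{B}_m^2:s\neq t\}/(\#\mathcal{B}_m)^2=1-1/q(m)\to 1$, averaging finishes Lemma \ref{l.outer.critical} and hence the proposition.

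The main obstacle will be guaranteeing that the two convergences used in the previous paragraph (the log-average spacing in Lemma \ref{l.SLLN.non.random} and the normalization in Lemma \ref{l.normalization}) are uniform across all scales $2^s m$ with $s\in\mathcal{B}_m$ simultaneously, so that the $o(1)$ in the display above is genuinely uniform in $(s,t)$. This is precisely what is engineered into assumptions \ref{a.summable} and \ref{a.uniform}, which are stated over the full range $n',n''\in\{n,2n,\ldots,2^{q(n)}n\}$ rather than at a single scale, together with the choice $q(n)=[\log_2(\log n)]$ fixed in Section \ref{s.random.phase.transition}. Once this uniformity is carefully invoked, no genuinely new estimate is required: the proof amounts to book-keeping of the pieces already proved in this section.
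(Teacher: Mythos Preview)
Your proposal is correct and follows essentially the same approach as the paper: the decomposition into self- and outer-interaction, the use of Proposition~\ref{p.aysmptotic.mu.critical.case} to kill the diagonal terms via $O(1)/q(m)$, and the proof of Lemma~\ref{l.outer.critical} via Lemma~\ref{l.estimate.log}, Lemma~\ref{l.SLLN.non.random}, and Lemma~\ref{l.normalization} all match the paper's argument. One minor remark: the specific choice $q(n)=[\log_2(\log n)]$ is only made in the random setting of Section~\ref{s.random.phase.transition}; in the deterministic Proposition~\ref{p.S.alpha=1.total.energy} (and hence here) one only uses that some $q(n)\to\infty$ exists for which \ref{a.summable}--\ref{a.uniform} hold, so that reference is unnecessary though harmless.
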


Notice that with Proposition \ref{p.total.energy.critical} we can show that $S$ is re-distributable when $\phi(x)\equiv1$. In order to remove $\phi(x)$, we will need to apply Proposition \ref{p.total.energy.critical} to a continuous approximation of $1/\phi(x)$ and take an appropriate subsequence:

\begin{proposition} \label{p.absolutely.continuous}
Suppose for each continuous function $f$, there exists a sequence of probability measures $\{\mu^n\}$ so that each $\mu^n$ has a piecewise continuous density with
support $V_n$, where $V_n$ is a finite union of disjoint $I_k$'s with $k\geq n$ with asymptotic behavior:
\begin{align}\label{p.asymptotic.distribution.assumption}
I(\mu^n)=& \frac{I(f(x)\phi(x)dx)}{(\int f(x)\phi(x)\,dx)^2} +o(1).
\end{align}
Then property \ref{property} holds.

\end{proposition}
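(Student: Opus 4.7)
Given a positive continuous function $g$ on $[0,1]$, the goal is to construct $\{\nu^n\}$ satisfying property P.1 with $I(\nu^n) = I(g\, dx)/(\int g\, dx)^2 + o(1)$. The plan has two stages: first, approximate the target measure $g\, dx$ by measures of the form $f_k\phi\, dx$ with $f_k\in C([0,1])$ positive (so the hypothesis of the proposition is directly applicable to each $f_k$); second, apply the hypothesis for each $k$ and use a diagonal extraction to construct the desired sequence $\{\nu^n\}$.

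For the first stage, since $\phi\in L^1([0,1])$ is positive a.e., the function $g/\phi$ is measurable and a.e.\ finite. I would apply Lusin's theorem with respect to the Borel measure $dx + \phi\, dx$ to the bounded truncation $\min(g/\phi, k)$, obtaining continuous $f_k$ with $0\leq f_k\leq k$ agreeing with $\min(g/\phi, k)$ outside an exceptional set $E_k$ of combined $(dx+\phi\, dx)$-measure at most $2^{-k}$. By Borel-Cantelli, for a.e.\ $x$ one has $f_k(x)\phi(x)=g(x)$ for all sufficiently large $k$. Together with the measure bounds on $E_k$ and the boundedness of $g$, this gives $L^1$ convergence $f_k\phi\to g$, hence $\int f_k\phi\, dx\to \int g\, dx$.

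For the energy convergence $I(f_k\phi\, dx)\to I(g\, dx)$, I would decompose
\[
I(f_k\phi\, dx) - I(g\, dx) = 2\!\int U^{g\, dx}(y)(f_k\phi - g)(y)\, dy + \!\int U^{(f_k\phi - g)\, dx}(y)(f_k\phi - g)(y)\, dy,
\]
where $U^\mu(y):=\int(-\log|x-y|)\, d\mu(x)$ is the logarithmic potential. The first term vanishes because $U^{g\, dx}$ is bounded and continuous on $[0,1]$ while $f_k\phi\to g$ in $L^1$. The second term requires more care: I would first truncate to $A_M:=\{1/M\leq \phi\leq M\}$, on which both $\phi$ and $1/\phi$ are bounded and the densities $f_k\phi$ can be chosen uniformly bounded, yielding uniform $L^\infty$ control of the corresponding logarithmic potentials; then pass $M\to\infty$ using monotone convergence of $g\mathbbm{1}_{A_M}\nearrow g$ together with non-negativity of $(-\log|x-y|)g(x)g(y)$ on $[0,1]^2$.

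With the approximation established, set $A_k:=I(f_k\phi\, dx)/(\int f_k\phi\, dx)^2$, so $A_k\to A:=I(g\, dx)/(\int g\, dx)^2$. The hypothesis supplies, for each $k$, a sequence $\{\mu^n_k\}_{n\geq 1}$ of probability measures with piecewise continuous densities, each supported on a finite union of disjoint $I_j$'s with $j\geq n$, such that $I(\mu^n_k)\to A_k$ as $n\to\infty$. Pick integers $N_k\nearrow\infty$ with $|I(\mu^n_k)-A_k|<1/k$ for $n\geq N_k$, set $k(n):=\max\{k:N_k\leq n\}$ (so $k(n)\to\infty$), and let $\nu^n:=\mu^n_{k(n)}$. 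Then each $\nu^n$ is supported on a finite union of disjoint $I_j$'s with $j\geq n$, and $I(\nu^n)\to A$, proving P.1. The principal obstacle is the energy convergence step: $L^1$-convergence of densities alone does not imply convergence of logarithmic energies, and the truncation-then-limit argument over the sets $A_M$ is the key tool to close the gap.
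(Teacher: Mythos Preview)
Your approach is precisely the paper's: set $f=g/\phi$, approximate by genuinely continuous functions, apply the hypothesis to those, and diagonalize. The paper's proof simply \emph{asserts} that a continuous $f_0$ exists with $I(f_0\phi\,dx)/(\int f_0\phi)^2$ close to $I(g\,dx)/(\int g)^2$ and proceeds from there, so your write-up is actually more detailed in that you isolate the energy-convergence step as the only nontrivial point.

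One technical wrinkle in your sketch: with the Lusin construction you describe, $f_k$ is bounded only by $k$, so on $E_k\cap A_M$ the density $f_k\phi$ can be as large as $kM$---the claim that it ``can be chosen uniformly bounded'' on $A_M$ does not follow from that construction as stated, and the tail $f_k\phi\,\mathbbm{1}_{A_M^c}$ is likewise uncontrolled. The clean repair is to reverse the order of quantifiers: fix the precision $\eps$, choose $M$ so that $I(g\mathbbm{1}_{A_M}\,dx)/(\int g\mathbbm{1}_{A_M})^2$ is $\eps$-close to the target (your monotone-convergence argument), and only then apply Lusin to the \emph{bounded} function $(g/\phi)\mathbbm{1}_{A_M}\le \|g\|_\infty M$, obtaining a single continuous $f$ with $0\le f\le \|g\|_\infty M$. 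The remaining energy comparison is then controlled by the fact that $I(\phi\,dx)<\infty$ (take $f\equiv 1$ in the hypothesis) together with absolute continuity of the nonnegative integrable function $(-\log|x-y|)\phi(x)\phi(y)$ on $[0,1]^2$. With this adjustment your argument goes through, and the diagonal extraction at the end is exactly right.
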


Let us go back to show that $S$ is re-distributable using a multi-level re-distribution before we prove Proposition \ref{p.absolutely.continuous}.

\begin{proof}[Proof of Proposition \ref{p.S.alpha=1.total.energy} ] Since $\phi(x)>0$ almost everywhere, then combining Proposition \ref{p.total.energy.critical} and Proposition \ref{p.absolutely.continuous} shows \ref{property} holds.

Given any probability measure $\nu$ with piecewise continuous density that is supported on a finite collection of intervals in $[0,1]$ and given any $\eps>0$, we may apply \ref{property} to a continuous $L^1$ approximation of the density function of $\nu$ to show that there exists a probability measure $\nu'$ satisfying properties (1) and (2) in Definition \ref{d.TEA}. Thus, $S$ is re-distributable.

\end{proof}

Now, let us analyze the asymptotic behavior of the new outer-interaction sum:

\begin{proof}[Proof of Lemma \ref{l.outer.critical} ] 
The goal is to show that 
\begin{align*} 
I(\hat{\mu}_{n},\hat{\mu}_{n'})=\frac{I(\mu_{\mathcal{A}_n},\mu_{\mathcal{A}_{n'}})}{\mu_{\mathcal{A}_n}(V_n)\cdot\mu_{\mathcal{A}_{n'}}(V_{n'})}\rightarrow \frac{I(f(x)\phi(x)\,dx)}{(\int f(x)\phi(x)\,dx)^2},
\end{align*}
as $m\rightarrow \infty$ and independently of our choice of $n,n'\in\{2^{s}m:s\in\mathcal{B}_m\}$. Once we accomplish this, then 
\begin{align*} 
\frac{1}{(\#\mathcal{B}_m)^2}\sum_{s\neq t} I(\hat{\mu}_{2^{s}m},\hat{\mu}_{2^{t}m})\rightarrow \frac{I(f(x)\phi(x)\,dx)}{(\int f(x)\phi(x)\,dx)^2},
\end{align*}
as $m\rightarrow\infty$. 

Lemma \ref{l.normalization} shows that
\begin{align*} 
\frac{1}{\mu_{\mathcal{A}_n}(V_n)\cdot\mu_{\mathcal{A}_{n'}}(V_{n'})}\rightarrow \frac{1}{(\int f(x)\phi(x)\,dx)^2},
\end{align*}
as $m\rightarrow \infty$ and independently of our choice of $n,n'\in\{2^{s}m:s\in\mathcal{B}_m\}$. 

Let us focus on $I(\mu_{\mathcal{A}_n},\mu_{\mathcal{A}_{n'}})$. Given $n\neq n'$ where $n,n'\in\{2^{s}m:s\in\mathcal{B}_m\}$,  we have that
\begin{align}
{I(\mu_{\mathcal{A}_n},\mu_{\mathcal{A}_{n'}})}=\frac{1}{(\#\mathcal{A}_n)(\#\mathcal{A}_{n'})}\sum_{i\neq j} I(\mu_i,\mu_j),\label{e.uniform.limit.energy} 
\end{align}
where $(i,j)\in\mathcal{A}_n\times\mathcal{A}_{n'}$.  By the gap control assumption \ref{a.uniform}, we know that for all large enough $m$ and every $i\neq j$, where $i,j\in\{m,\ldots,{2^{q(m)}m-1}\}$, we have 
\begin{align*}
\frac{l_i+l_j}{2|c_i-c_j|}\leq (1-e^{-\eps}),
\end{align*}
which is the needed condition \eqref{e.estimate.log.condition} to apply Lemma \ref{l.estimate.log} for all large enough $m$. Lemma \ref{l.estimate.log} gives us
\begin{align*}
|I(\mu_{i},\mu_{j})-(-\log |c_i-c_j|)f(c_i)f(c_j)|\leq (2K(-\log |c_i-c_j|)+K^2)\eps,
\end{align*}
for every $i\neq j$ where $i,j\in\{m,\ldots,{2^{q(m)}m-1}\}$ and for all large enough $m$. Adding this up over $(i,j)\in\mathcal{A}_n\times\mathcal{A}_{n'}$ and then dividing by $(\#\mathcal{A}_n)(\#\mathcal{A}_{n'})$ gives us:

\begin{align*}
\left|\frac{1}{(\#\mathcal{A}_n)(\#\mathcal{A}_{n'})}\sum_{i\neq j}I(\mu_{i},\mu_{j}) \right.
-\left.\frac{1}{(\#\mathcal{A}_n)(\#\mathcal{A}_{n'})}\sum_{i\neq j} (-\log |c_i-c_j|)f(c_i)f(c_j)\right|\\
\leq\frac{2K\eps}{(\#\mathcal{A}_n)(\#\mathcal{A}_{n'})}\sum_{i\neq j} (-\log |c_i-c_j|)\\
+K^2\eps.
\end{align*}
We remark that the inequality holds independently of our choice of $n,n'\in\{2^{s}m:s\in\mathcal{B}_m\}$ for all large enough $m$. By the distribution assumption \ref{a.distributed} and the $\log$-average spacing assumption \ref{a.summable}, we can apply Lemma \ref{l.SLLN.non.random} twice to the last two sums above. One application yields:
\begin{align*}
\frac{1}{(\#\mathcal{A}_n)(\#\mathcal{A}_{n'})}\sum_{i\neq j} (-\log |c_i-c_j|)f(c_i)f(c_j)\rightarrow I(f(x)\phi(x)\,dx),
\end{align*}
as $m\rightarrow \infty$ with $n,n'\geq m$. For the second application we take $f=1$ in Lemma \ref{l.SLLN.non.random} to get
\begin{align*}
\frac{1}{(\#\mathcal{A}_n)^2}\sum_{i\neq j} (-\log |c_i-c_j|)\rightarrow I(\phi(x)\,dx)< \infty,
\end{align*}
as $m\rightarrow \infty$ with $n,n'\geq m$.
Therefore, for $n,n'\in\{2^sm:s\in\mathcal{B}_m\}$, we have
\begin{align*}
\left|\lim_{m\rightarrow\infty} I(\mu_{\mathcal{A}_n},\mu_{\mathcal{A}_{n'}}) 
-I(f(x)\phi(x)\,dx) \right|\\
\leq{2K\eps}I(\phi(x)\,dx) +K^2\eps.
\end{align*}
Since $\eps>0$ is arbitrary and $I(\phi(x)\,dx)<\infty$, then for $n,n'\in\{2^sm:s\in\mathcal{B}_m\}$, we have
\begin{align*}
\left|\lim_{m\rightarrow\infty} I(\mu_{\mathcal{A}_n},\mu_{\mathcal{A}_{n'}}) 
-I(f(x)\phi(x)\,dx)\right|=0.
\end{align*}
Therefore, as $m\rightarrow \infty$, then

\begin{align*} 
I(\hat{\mu}_{n},\hat{\mu}_{n'})\rightarrow \frac{I(f(x)\phi(x)\,dx)}{(\int f(x)\phi(x)\,dx)^2},
\end{align*}
where $n,n'\in\{2^sm:s\in\mathcal{B}_m\}$, which completes the proof. 

\end{proof}

\begin{proof}[Proof of Proposition \ref{p.absolutely.continuous} ] 

For every continuous function $h$, set
\begin{align*}
f(x):=\frac{h(x)}{\phi(x)},
\end{align*}
when $\phi\neq 0$ and $0$ otherwise. For each $\eps>0$, there exists a continuous function $f'$ such that 
\begin{align*}
\left| \frac{I(f_0(x)\phi(x)\,dx)}{(\int_0^1 f_0(x)\phi(x)\,dx)^2}-  \frac{I(f(x)\phi(x)\,dx)}{(\int_0^1 f(x)\phi(x)\,dx)^2}\right|<\eps/2.
\end{align*}
Applying \eqref{p.asymptotic.distribution.assumption} to $f_0$, gives us that for each $\eps>0$, there exists $N$ such that for every $n\geq N$ we have
\begin{align*}
\left| \frac{I(f_0(x)\phi(x))}{(\int_0^1 f_0(x)\phi(x)\,dx)^2}-  I(\mu^n)\right|<\eps/2,
\end{align*}
where each $\mu^n$ is a probability measure with a piecewise continuous density with support in $V_n$, where $V_n$ is a finite unions of disjoint $I_k$'s with $k\geq n$. Since $f(x)\phi(x)=h(x)$ a.e., then for every $\eps>0$, there exists $n\in\N$ as large as we need such that 
 
\begin{align*}
\left|I({\mu}^n)-  \frac{I(h(x)\,dx)}{(\int_0^1 h(x)\,dx)^2}\right|\leq& \left|I(\mu^n)- \frac{I(f_0(x)\phi(x))}{(\int_0^1 f_0(x)\phi(x)\,dx)^2}\right|\\
&+\left| \frac{I(f_0(x)\phi(x))}{(\int_0^1 f_0(x)\phi(x)\,dx)^2}-  \frac{I(f(x)\phi(x))}{(\int_0^1 f(x)\phi(x)\,dx)^2}\right|\\
<&\eps.
\end{align*}

Hence, there exists a subsequence $n_k$ and probability measures $\mu^{n_k}$ with piecewise continuous density   supported in $V_{n_k}$ such that
\begin{align}\label{e.concl.asymptotic}
I({\mu}^{n_k})=& \frac{I(h(x)\,dx)}{(\int_0^1 h(x)\,dx)^2}+o(1).
\end{align}
Each $\mu^{n_k}$ is a probability measure with a piecewise continuous density with support contained in $V_{n_k}$, that is a finite union of disjoint $I_j$'s with $j\geq k$. Hence, these $\{\mu^{n_k}\}$ satisfy \ref{property}.

\end{proof}

\section*{Acknowledgements}
I would like to thank Victor Kleptsyn for numerous discussions and for the idea of the proof of Lemma \ref{l.A2sat}, as well as for reading multiple drafts
of this paper and offering many helpful suggestions, and Anton Gorodetski for his remarks. 


\end{document}